\newtheorem{theorem}{Theorem}
\newtheorem*{definition}{Definition}
\newtheorem{lemma}{Lemma}
\def\cc#1{\mathcal{#1}}
\newcommand{\N}{{\mathbb N}}
\newcommand{\D}[1]{D^{\tiny \text{\it #1}}}
\newcommand{\cf}{\!\text{\it cf}}
\newcommand{\start}{\text{\it start}}
\newcommand{\Tau}{\mathcal T}
\author[V. Becher]{Verónica Becher}
\address[V. Becher]{
\noindent  Departmento de  Computaci\'on,   Facultad de Ciencias Exactas y Naturales\newline
 Universidad de Buenos Aires \& ICC CONICET\newline
Pabell\'on I, Ciudad Universitaria, 1428 Buenos Aires, Argentina}
\email{vbecher@dc.uba.ar}
\author[M. G. Madritsch]{Manfred G. Madritsch}
\address[M. G. Madritsch]{
\noindent 1. Universit\'e de Lorraine, Institut Elie Cartan de Lorraine, UMR 7502, Vandoeuvre-l\'es-Nancy, F-54506, France;\newline
\noindent 2. CNRS, Institut Elie Cartan de Lorraine, UMR 7502, Vandoeuvre-l\'es-Nancy, F-54506, France}
\email{manfred.madritsch@univ-lorraine.fr}
\title{On a question of Mendès France on normal numbers}
\date{\today}
\begin{document}

\maketitle
\bigskip

\begin{abstract}
  In 2008 or earlier, Michel Mendès France asked for an instance of a 
  real number~$x$ such that both~$x$ and~$1/x$ are simply
  normal to a given integer base~$b$. We give a positive answer to
  this question by  constructing a number $x$ such that both
  $x$ and its reciprocal $1/x$ are continued fraction normal as well
  as normal to all integer bases greater than or equal to~$2$.
  Moreover, $x$ and $1/x$ are both computable.
\end{abstract}
\bigskip

{\footnotesize {\bf MSC2020}: 11K16, 11J70}
\bigskip


\section{Introduction and statement of results}

In this note we solve a problem posed by Michel Mendès France asking
for an  instance  of a real number~$x$ such that both~$x$
and~$1/x$ are simply normal to a given integer base~$b$.  The problem
appeared in the literature in 2008 in~\cite{Rivoal} and it was
presented to us by Gerhard Larcher.

The continued fraction representation of a positive number and its
reciprocal are identical except for a shift one place left or right
depending on whether the number is less than~$1$ or greater than~$1$,
respectively.  That is, the numbers represented by
${\displaystyle [a_{0};a_{1},a_{2},\ldots} ]$ and
${\displaystyle [0;a_{0},a_{1},\ldots ]} $ are reciprocals. 
This fact allows us to prove the following extension of the problem of Mendès France.

\begin{theorem}\label{thm}
  We give a construction of  a number $x$ such that both $x$ and its reciprocal
  $1/x$ are continued fraction normal and absolutely normal.
  Moreover, they are both computable.
\end{theorem}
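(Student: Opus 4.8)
The plan is to build $x$ from its continued fraction expansion $x=[a_0;a_1,a_2,\dots]$, committing to the partial quotients one finite block at a time so as to meet, in the limit, three families of requirements: that the sequence $(a_n)_{n\ge 0}$ be continued fraction normal (each finite block of positive integers occurring in it with the frequency prescribed by the Gauss measure of the corresponding cylinder); that $x$ be normal to every integer base $b\ge 2$; and that $1/x$ be normal to every integer base $b\ge 2$. The first family already disposes of the continued fraction normality of \emph{both} $x$ and $1/x$: as recalled before the statement, $1/x=[0;a_0,a_1,a_2,\dots]$, so the partial quotient sequence of $1/x$ is the one-place shift of that of $x$, and limiting block frequencies are shift invariant. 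Likewise, only $x$ needs to be shown computable, since $1/x$ is then computable from it.

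The construction rests on the fact that the set $\mathcal N\subseteq(0,1)$ of numbers meeting all three families has full Lebesgue measure. Almost every number is continued fraction normal, by the ergodicity of the Gauss map with respect to its invariant measure (which is equivalent to Lebesgue measure) together with Birkhoff's theorem; almost every number is absolutely normal, by Borel's theorem; and $\{x\in(0,1):1/x\text{ is absolutely normal}\}$ is the preimage of a full-measure subset of $(1,\infty)$ under the smooth diffeomorphism $t\mapsto 1/t$, which maps Lebesgue-null sets to Lebesgue-null sets, so it too has full measure. A countable intersection of full-measure sets has full measure, hence $\mathcal N\ne\emptyset$; the real task is to pin down a \emph{computable} element of $\mathcal N$.

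To do so I would produce, by a computable procedure, a nested sequence of continued fraction cylinders $I_0\supseteq I_1\supseteq\cdots$ with $I_k=\{[a_0;a_1,\dots,a_{n_k},t]:t\ge 1\}$ and $|I_k|\to 0$, and set $x=\bigcap_k I_k$. Enumerate the requirements so that at each stage only finitely many are active: one for every base $b$, block length $\ell$ and rational tolerance (bounding the deviation of the length-$\ell$ block frequencies in the base-$b$ expansion of $x$, and separately of $1/x$, from $b^{-\ell}$), and one for every partial quotient block and rational tolerance. Passing from $I_{k-1}$ to $I_k$, append further partial quotients, found by exhaustive search among the finitely many admissible short extensions, so that $I_k$ is short enough that the active base-$b$ block frequencies of $x$ and of $1/x$ are already determined to within the required tolerance (and can be perturbed at later stages by only a controllably small amount), the appended block keeps the empirical partial-quotient statistics on target for the active continued fraction requirements, and the Lebesgue measure of the points of $I_k$ still failing some active requirement has dropped below $2^{-k}$. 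That such an extension exists at every stage is the quantitative, cylinder-relative form of the full-measure statement above, used together with the bounded distortion of the Gauss map on cylinders and of $t\mapsto 1/t$ on $I_k$, so that digit statistics computed inside $I_k$ transfer, with controlled loss, to $\{1/t:t\in I_k\}$. Every step — counting base-$b$ digit blocks over a rational interval, estimating the measures of the relevant good and bad sets, searching for an extension — is effective in the now standard way, so the endpoints of the $I_k$, and hence $x$ and $1/x$, are computable.

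The main obstacle is steering the two base-$b$ expansions at once. Committing to $a_0,\dots,a_{n_k}$ confines $x$ to a short interval, hence fixes finitely many of its base-$b$ digits, and \emph{simultaneously} confines $1/x$ to $\{1/t:t\in I_k\}$; but the base-$b$ expansions of a point and of its reciprocal are related nonlinearly, so one must take $n_k$ large enough, as a computable function of the active requirements, that $I_k$ is both short enough for the base-$b$ statistics inside it to be essentially constant and short enough that $t\mapsto 1/t$ is nearly affine on it, whence near-uniform base-$b$ statistics on $I_k$ force near-uniform base-$b$ statistics on its reciprocal image. Verifying that this can be arranged together with remaining on track for the continued fraction requirements — that the set of good short extensions is never empty and has Lebesgue measure bounded below by a computable amount — is where the ergodicity of the Gauss map and Borel's theorem are invoked in effective form, and is the crux of the argument.
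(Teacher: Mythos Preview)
Your outline is close to the paper's own approach: both build $x$ via a nested sequence of continued fraction cylinders, dispose of cf-normality of $x$ and $1/x$ together via the shift relation, and secure absolute normality in each base by searching for extensions that avoid measure-small bad zones. The paper makes this explicit with two parallel sequences of ``$t$-bricks'' (a cf-interval together with $b$-ary intervals for $2\le b\le t$), one for $x$ and one for $y=1/x-1$; the key technical point is that the cylinders $I_{[1,a_2,\dots,a_n]}$ and $I_{[a_2,\dots,a_n]}$ have lengths within a factor of $4$, so the base-$b$ bad zones for $y$ can be controlled at the same scale as those for $x$, and a single common cf-extension can avoid all of them at once.

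Your final paragraph, however, contains a step that would fail as written. You say that once $I_k$ is short enough that $t\mapsto 1/t$ is nearly affine on it, ``near-uniform base-$b$ statistics on $I_k$ force near-uniform base-$b$ statistics on its reciprocal image.'' This is false: a nearly affine map does not carry the base-$b$ digit frequencies of a prefix to those of the image prefix unless the scale factor is a power of $b$. A short interval whose common base-$b$ prefix has perfect digit frequencies can map under $t\mapsto 1/t$ to an interval whose common prefix has arbitrarily bad frequencies, and vice versa. What bounded distortion of $t\mapsto 1/t$ \emph{does} give you is a transfer of \emph{measures}: the set $\{t\in I_k: 1/t\text{ has a bad base-}b\text{ prefix}\}$ has Lebesgue measure comparable to that of the corresponding bad set inside $1/I_k$, and \emph{that} is small by the Bernstein-type large deviation bound applied in the image interval. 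The existence of a good extension therefore comes not from statistics on $I_k$ forcing statistics on $1/I_k$, but from the \emph{union} of two independent bad sets (one native to $I_k$, one pulled back from $1/I_k$) still having small measure in $I_k$. Your third paragraph states exactly this correctly; your fourth paragraph contradicts it and should be rewritten to match.
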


To construct $x$ and $1/x$ we define incrementally their continued
fraction expansions.  To ensure that both $x$ and $1/x$ are
continued fraction normal and absolutely normal we follow the work by
Becher and Yuhjtman in~\cite{BY}, where they construct a number $x$
which is continued fraction normal and absolutely normal. The
challenge in the present paper is to handle simultaneously \emph{two}
constructions, one for $x$ and one for $1/x$. These constructions work
by defining successive refinements of appropriate subintervals to
achieve, in the limit, in both cases, continued fraction normality and
simple normality to all integer bases. At each step the choice of
digits for the two constructions is done without revisiting the digits
chosen at previous steps. The difficulty is to choose the same digits
for the continued fraction expansions for $x$ and for $1/x$.

\section{Two types of normality}

In this section we follow the standard notation in this area. For a
detailed account on normal numbers  see~\cite{Kuipers2006,
  drmotatichy1997, Bugeaud2012, BCchapter},
  for symbolic dynamics  see~\cite{lindmarcus1995, DK}
and for a combination of both see~\cite{madritsch2018}.

As usual we write $\N=\{1,2,3,\ldots\}$ to denote the set of positive
integers and $\N^k$ to denote the set of $k$ tuples of positive
integers.  For a finite set $S$, we denote by $\#S$ its
cardinality. Similarly for an infinite set $S$ of real numbers, $|S|$
denotes its Lebesgue measure; hence, if $S$ is an interval in the real
line, then $|S|$ is its length. We use Landau's notation for the
asymptotic behaviour of functions. Thus a function $g(x)=O(f(x))$ if
there exist constants $x_0$ and $c$ such that for every $x\geq x_0$,
$|g(x)| < c \cdot |f(x)|$.  We write $\log$ to denote the logarithm to
base~$e$.

\subsection{On continued fraction normality}

For a real number $x$ in the unit interval, the continued fraction
expansion of $x$ is the integer part $a_0=\lfloor x\rfloor$ together with a sequence of
positive integers $a_1, a_2, \ldots $, such that
\[
x=a_0+ 
\cfrac{1}{a_1 + \cfrac{1}{a_2 + \cfrac{1}{\ddots \, + \cfrac{1}{a_n + \cfrac{1}{\ddots} }   }}}
\]
and we write $x=[a_0;a_1,a_2,\ldots]$ for short.  This expansion of a
number can be seen as an infinite word over the alphabet $\N$.
Since normality is an asymptotic property of the digits we drop the
integer part of the continued fraction representation in the sequel
and write $[a_1, a_2 \ldots ]$ instead of $[a_0; a_1, a_2, \ldots]$.

A way of obtaining the continued fraction expansion is applying the
Gauss map $T\colon [0,1]\to[0,1]$ defined by
\[T(x)=\begin{cases}\frac1x-\left\lfloor\frac1x\right\rfloor&\text{if }x\neq0,\\
    0&\text{otherwise.}\end{cases}
\]
If $x=[a_1,a_2,\ldots]$ then $T^n(x)=[a_{n+1}, a_{n+2}, \ldots]$ and
for every~$n\geq 1$, $a_n=\lfloor 1/T^{n-1}(x)\rfloor$.  Otherwise
said, the Gauss map corresponds to the left shift in the associated
symbolic dynamical system over the alphabet $\mathbb{N}$.

The map $T$ possesses an invariant ergodic measure,
the Gauss measure $\mu$, which is absolutely continuous with respect
to Lebesgue measure (\textit{cf.} Dajani and Kraaikamp \cite{DK}). 
In particular, for every Lebesgue measurable set $A$, we have
\[
\mu(A)= \frac{1}{\log 2}\int_A \frac{1}{1+x} \ {\rm d}x.
\]
An interval $I$ in the unit interval is a  cylinder set of order $n$ with respect
to the continued fraction expansion, or  \cf-ary  of order $n$,  
if there is a finite
continued fraction $[a_1,\dotsc, a_n]$ such that the interval $I$ is
equal to the set of all the numbers whose first $n$ digits of their
continued fraction expansion are $a_1,\dotsc, a_n$.  Thus,
\begin{align*}
I_{[a_1, \dotsc, a_n]} &= ([a_1, \dotsc, a_n], [a_1, \dotsc, a_n +1]), \mbox{ or }
\\
I_{[a_1, \dotsc, a_n]} &= ([a_1, \dotsc, a_n+1], [a_1, \dotsc, a_n]) 
\end{align*}
depending on whether $n$ is even or odd, respectively. The
set of \cf-ary intervals of order~$n$ form a partition of the unit
interval in infinitely many parts of different lengths.

A real number $x=[a_1,a_2,\ldots]$ is continued fraction normal
(or \cf-normal for short) if every word of positive integers occurs in
its continued fraction expansion with the asymptotic frequency
determined by the Gauss measure. Otherwise said, $x$ is generic for
$\mu$, \textit{i.e.} for every positive integer
$k$ and for every word $v_1\dotsc v_k$ in $\mathbb N^k$, we have
\[
\lim_{n\to\infty}\frac{1}{n} \#\{ j: 1\leq j\leq n, a_{j}=v_1, \dotsc, a_{j+k-1}=v_k\}
=\mu(I_{[v_1, \dotsc, v_k]}).
\]
In order to get a feeling for \cf-normality we provide some remarks.
All quadratic irrationals are not \cf-normal,
because their expansions are periodic. However, nothing particular is
known for algebraic numbers of higher degree. The number
$e= [2;1,2,1,1,4,1,1,6,1,1,8,\ldots]$ is not \cf-normal because it is
the concatenation of the pattern $(1 m 1)$, for all even~$m$ in
increasing order, and no other odd digit except~$1$ occurs in the
expansion.  Nothing else is known about \cf-normality of other
transcendental constants. By Birkhoff's Ergodic
Theorem~\cite{Birkhoff1931}, almost every real in the
unit interval is \cf-normal and there are several constructions of \cf-normal numbers.

\subsection{On normality to integer bases}

For an integer $b\geq2$ called the base we denote by
$\mathcal{N}_b=\{0,\ldots,b-1\}$ the corresponding set of digits. 
Then, every positive integer $n$ has a unique representation of the
form
\[n=a_\ell b^{\ell}+\cdots+a_1b+a_0\] with $a_i\in\mathcal{N}_b$ for
$0\leq i\leq\ell$.  This representation can be extended to real
numbers $x$ in $[0,1]$~by
\begin{gather*}\label{bary-expansion} 
x = \sum_{i=1}^{\infty} a_i b^{-i}
\end{gather*}
with $a_i\in\mathcal{N}_b$ for $i\geq1$ and $a_i\neq b-1$ infinitely
often. The latter ensures that every rational number has a unique
representation (the greedy one).

As in the case of continued fraction expansions there exists a map in
the unit interval that describes the dynamic aspect of the $b$-ary
expansion. For a positive integer $b\geq2$ we consider the map
$S_b:[0,1]\to [0,1]$ defined by
\[
S_b(x)=bx-\lfloor bx\rfloor.
\] 

An interval $I$ in the unit interval  is  a cylinder set of order $n$ with respect to the $b$-ary
expansion (or $b$-ary of order $n$ for short) if there is a finite word
$d_1\cdots d_n$ over $\mathcal{N}_b$ such that the interval $I$ is equal
to the set of real numbers whose first $n$ digits of their $b$-ary
expansion are equal to $d_1,\dotsc, d_n$.  The set of $b$-ary
intervals of order $n$ form a partition of the unit interval in
finitely many parts of equal length (in contrast to  the infinitely many
parts of different lengths in the case of the continued fraction expansion).

A real $x=a_1b^{-1}+a_2b^{-2}+\cdots$ is simply normal with
respect to base $b$ if every digit occurs in the $b$-ary expansion of
$x$ with the same asymptotic frequency $1/b$.  That is, for each
$v\in\mathcal{N}_b$,
\[\lim_{n\to\infty}\frac{\#\{1\leq j\leq n\colon
    a_j=v\}}{n}=\frac1b.
\] 
Normality to base $b$ is simple normality to bases
$b, b^2, b^3, \ldots$, all the powers of $b$ (this definition of
normality is equivalent to Borel's original definition
\cite{Borel1909}, the proof is due to Pillai in 1940 \cite[Theorem
4.2]{Bugeaud2012}.)  Absolute normality is normality to every integer
base $b\geq2$; hence, simple normality to every integer base $b\geq2$.
Borel showed that almost all real numbers (with respect to Lebesgue
measure) are absolutely normal. In the same way as above this also
follows from Birkhoff's Ergodic Theorem~\cite{Birkhoff1931}, since the
Lebesgue measure is ergodic with respect to the map $S_b$
(\textit{cf.} Dajani and Kraaikamp~\cite{DK}).

\section{Definitions and Lemmas}

To prove Theorem~\ref{thm}  we give two simultaneous constructions, 
one for $x$ and one  for~$1/x$. 
For each, we follow the construction of a  continued
fraction normal and absolutely normal number of Becher and Yuhjtman
in~\cite{BY}, which in turn is based on the work on aboslutely normal numbers~\cite{poly}. 
For a similar construction for a normal number with respect to all Pisot
bases see Madritsch, Scheerer and Tichy~\cite{MST}.

%
%
%
%

\subsection{Definitions}

Each of the two constructions work by defining a sequence of nested
intervals.  For this we introduce the definition of a $t$-brick and a
refinement of a $t$-brick.  To control continued fraction normality we
use \cf-ary intervals and to control normality in each integer base
$b$ we use $b$-ary intervals.

\begin{definition}[$t$-brick]
 For an integer $t \geq 2$, a $t$-brick is a tuple 
$(\sigma_{\cf},\sigma_2, \ldots, \sigma_t)$  as follows
\begin{itemize} 
\item[-]  the interval $\sigma_{\cf}$ is \cf-ary, 
\item[-] for each $b=2, \ldots t$, $\sigma_b$ is either a $b$-ary
  interval or the union of two consecutive $b$-ary intervals of the
  same order;
\item[-] for each $b=2, \ldots t$,
\begin{align*}
\sigma_{\cf} &\subset \sigma_b
\\
{|\sigma_{\cf}|} &\geq \frac{|\sigma_b|}{4\cdot 16e^{4C}b}.
\end{align*}
\end{itemize}
\end{definition}

We use the classical notion of discrepancy, but  not on arbitrary  intervals.
For discrepancy with  respect to continued fraction expansions we consider the classical discrepancy  restricted to
\cf-ary intervals . 
For discrepancy with respect to $b$-ary expansion we consider discrepancy 
restricted to $b$-ary intervals.

\begin{definition}[Discrepancy for continued fraction]
For a finite word
$\mathbf{v}=v_1v_2\ldots v_k$ over the alphabet $\N$ we denote
the discrepancy of $x=[a_1, a_2, \ldots ]$ with respect to $\mathbf{v}$ in the first $n$ positions of its continued
fraction expansion by
\[
\D{\cf-ary}_{\mathbf{v},n}(x)=
  \left|\frac{1}{n} \#\{ j: 1\leq j\leq n, a_{j}=v_1, \dotsc, a_{j+k-1}=v_k\}-
\mu\left(I_{[v_1,\dotsc,v_k]}\right)\right|
=\mu(I_{[v_1, \dotsc, v_k]}).
\]
\end{definition}

Clearly, a real number $x$ is continued fraction normal if and only if
for every positive integer~$k$, and for every word $\mathbf{v}\in\N^k$
of~$k$ positive integers,
\[
\lim_{n\to \infty} \D{\cf-ary}_{\mathbf{v},n}(x)=0.
\]
With some notation abuse we write
$\D{\cf-ary}_{\mathbf{v},n}({\mathbf w})$ for the discrepancy of a
\cf-word ${\mathbf w}$ of positive integers.

In a similar way we define the $b$-ary variant of discrepancy as the
distance of a finite word from uniform distribution of the digits.
\begin{definition}[Discrepancy for integer base representation]
For a real $x=\sum_{j\geq 1} a_j b^{-j}$ we define the discrepancy of the digit $v\in {\mathcal N}_b$ among the first $n$
digits of its $b$-ary expansion 
by
\[
\D{b-ary}_{v,n}(x)= \left|  \frac{1}{n} \#\{1\leq j\leq n\colon   a_j=v \} -\frac{1}{b}\right|.
\]
and
\[
\D{b-ary}_{n}(x)=\max_{v\in\mathcal{N}_b}\D{b-ary}_{v,n}(x).
\]
\end{definition}

Clearly, a real number $x$ is simply normal to base $b$ if and only if
its expansion in base $b$ is such that
\[
  \lim_{n\to\infty} \D{b-ary}_n(x) = 0.
\] 
With some notation abuse we  write  $\D{b-ary}_{n}({\mathbf w})$  for
the discrepancy of a word ${\mathbf w}$ over the alphabet~$\mathcal{N}_b$.

%

\begin{definition}[Refinement of a $t$-brick]\label{def:refinement}
  A $t$-brick $\vec \sigma =(\sigma_{\cf},\sigma_2,\ldots ,\sigma_t)$
  is refined by a $t'$-brick
  $\vec \tau = (\tau_{\cf}, \tau_2\ldots \tau_{t'})$ if

  \begin{itemize}
  \item[-] $t' = t$ or $t'=t+1$,

  \item[-] $\tau_{\cf} \subseteq \sigma_{\cf}$,

  \item[-] for $b=2, \ldots , t$, $\tau_{b} \subset \sigma_b$.
  \end{itemize}

  The refinement is said to have discrepancy less than $\epsilon$ if
  \begin{enumerate}
  \item[-] the new \cf-word $\mathbf{w}$ corresponding to the
    inclusion $\tau_{\cf} \subset \sigma_{\cf}$ satisfies that for
    every word $\mathbf{v}$ of $t$ digits all less than or equal to
    $t$, $\D{\cf-ary}_{\mathbf{v},|\mathbf{w}|}(\mathbf{w})$ is less
    than $\epsilon - (t-1)/|\mathbf{w}|$.

  \item[-] for each $b = 2,\ldots,t$ the new word $\mathbf{w}$ in base
    $b$ corresponding to the inclusion $\tau_b \subset \sigma_b$ has
    simple discrepancy
    $\D{b-ary}_{\left|\mathbf{w}\right|}(\mathbf{w})$ less than
    $\epsilon$.
  \end{enumerate}

\end{definition}

Notice that if $t'>t$ the definition of a refinement of a $t$-brick
gives no condition on $\tau_{t'}$.

\subsection{Lemmas}
The construction consists in choosing a sequence of nested intervals of
each type.  En each case the subinterval to be chosen is independent
of the subintervals chosen in previous steps.  We need to control the
discrepancy of the word representing the subinterval and the size of
the interval, which should be  larger than the measure of the bad
zones. By bad zones we mean  the \cf-ary and $b$-ary intervals corresponding to 
words with large discrepancy.

\subsubsection{On the length of continued fraction intervals}
We start by considering the length of the different continued fraction
intervals. For $x=[a_1,a_2,\ldots]$ we recursively define the
functions $p_{n}(x)$ and $q_{n}(x)$, called the convergents of $x$, as
follows.  We set $p_{-1}(x)=q_0(x)=1$ and $p_0(x)=q_{-1}(x)=0$ and
recursively for $n\geq 1$,
\begin{gather*}
p_n(x)= a_n p_{n-1}(x) + p_{n-2}(x),
\\
q_n(x)= a_n q_{n-1}(x) + q_{n-2}(x).
\end{gather*}
For irrational $x=[a_1, a_2, \ldots]$, $p_n(x)/q_n(x)$ is the $n$th approximant
to $x$ and converges to $x$ as $n$ tends to infinity. 
For rational $x=[a_1, \dotsc, a_n]$, we have that
$x=p_n(x)/q_n(x)$ and we  write $q(x)$ to denote
$q_n(x)$. Observe that for every $x$,
 $(p_n (x))_{n \geq 1}$ and $(q_n(x))_{n \geq 1}$
are increasing. Furthermore, the length of a \cf-ary interval is
\[
\left|I_{[a_1,\dotsc,a_n]}\right| = \frac{1}{q_n(q_n + q_{n-1})}.
\]



 

\begin{lemma}   \label{lemma:cfrelation}   
  For $n\in\mathbb{N}$ and $a_i\in\mathbb{N}$ for $2\leq i\leq n$ we have
\[
| I_{[0;a_2, .., a_n]}|  / 4 \leq |I_{[0;1,a_2 .., a_n]} |\leq  | I_{[0;a_2, .., a_n]}| .
\]
\end{lemma}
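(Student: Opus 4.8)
The plan is to realize the interval $I_{[0;1,a_2,\ldots,a_n]}$ as the image of $I_{[0;a_2,\ldots,a_n]}$ under the inverse branch of the Gauss map $T$ corresponding to the partial quotient $1$, that is, under the Möbius transformation $\phi(y)=1/(1+y)$, which is a monotone bijection of $(0,1)$ onto $(1/2,1)$. The first step is to check that $\phi$ does this correctly: every $x\in(1/2,1)$ satisfies $\lfloor1/x\rfloor=1$ and $T(x)=1/x-1$, so $x=\phi(y)$ forces $T(x)=y$, and hence the continued fraction expansion of $\phi(y)$ is the digit $1$ followed by that of $y$. Consequently $\phi$ restricts to a bijection of the \cf-ary interval $I_{[0;a_2,\ldots,a_n]}$ onto the \cf-ary interval $I_{[0;1,a_2,\ldots,a_n]}$ (both are intervals, by the description of cylinder sets above).

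The second step is the change of variables formula for Lebesgue measure together with a trivial estimate:
\[
|I_{[0;1,a_2,\ldots,a_n]}|=\int_{I_{[0;a_2,\ldots,a_n]}}|\phi'(y)|\,dy=\int_{I_{[0;a_2,\ldots,a_n]}}\frac{dy}{(1+y)^2}.
\]
Since $I_{[0;a_2,\ldots,a_n]}\subseteq(0,1)$ and $\tfrac14\leq(1+y)^{-2}\leq1$ for $y\in[0,1]$, this integral lies between $\tfrac14\,|I_{[0;a_2,\ldots,a_n]}|$ and $|I_{[0;a_2,\ldots,a_n]}|$, which is precisely the asserted inequality. No case distinction on $n$ is required: for $n=1$ the left interval is $(0,1)$ itself and the identity reduces to $\tfrac12=\int_0^1(1+y)^{-2}\,dy$.

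An alternative, purely arithmetic route uses only the length formula $|I_{[c_1,\ldots,c_m]}|=1/\bigl(q_m(q_m+q_{m-1})\bigr)$: writing $q_k$ for the convergent denominators of $[a_2,\ldots,a_n]$ and $Q_k$ for those of $[1,a_2,\ldots,a_n]$, a one-line induction on $k$ from the recursion $q_k=a_{k+1}q_{k-1}+q_{k-2}$ and the bound $a_i\geq1$ gives $q_k\leq Q_{k+1}\leq2q_k$ for all $k$, and feeding the two relevant instances into $Q_{m+1}(Q_{m+1}+Q_m)$ yields exactly the constants $1$ and $4$.

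I do not anticipate any real obstacle. The only delicate point is the identification $\phi\bigl(I_{[0;a_2,\ldots,a_n]}\bigr)=I_{[0;1,a_2,\ldots,a_n]}$ in the first step — that is, that prepending the digit $1$ to a cylinder is exactly the effect of $\phi$ — which is the standard relationship between cylinder sets and inverse branches of the Gauss map; after that the proof is a single elementary bound on $(1+y)^{-2}$.
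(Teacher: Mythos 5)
Your argument is correct, but it takes a different route from the paper, which does not prove the lemma at all: it simply cites \cite[Lemma 3]{BY}, where the statement appears as a special case. Your first proof --- identifying $I_{[0;1,a_2,\ldots,a_n]}$ as the image of $I_{[0;a_2,\ldots,a_n]}$ under the inverse branch $\phi(y)=1/(1+y)$ of the Gauss map and then bounding $|\phi'(y)|=(1+y)^{-2}$ between $1/4$ and $1$ on $[0,1]$ --- is complete and self-contained; the one point needing care, that prepending the digit $1$ is exactly the effect of $\phi$ on cylinders, you handle correctly by checking that $x\in(1/2,1)$ gives $\lfloor 1/x\rfloor=1$ and $T(x)=1/x-1$, and since $\phi$ is a homeomorphism the two endpoints $[0;1,a_2,\ldots,a_n]=\phi([0;a_2,\ldots,a_n])$ and $[0;1,a_2,\ldots,a_n+1]=\phi([0;a_2,\ldots,a_n+1])$ pin down the image interval regardless of orientation. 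Your alternative arithmetic route also checks out: with $q_k$ the denominators of $[a_2,\ldots,a_n]$ and $Q_k$ those of $[1,a_2,\ldots,a_n]$ one gets $Q_1=1=q_0$, $Q_2=q_1+1\leq 2q_1$, and the induction $Q_{k+1}=a_{k+1}Q_k+Q_{k-1}$ preserves $q_k\leq Q_{k+1}\leq 2q_k$, so $Q_{m+1}(Q_{m+1}+Q_m)$ lies between $q_m(q_m+q_{m-1})$ and $4\,q_m(q_m+q_{m-1})$, which is exactly the claim via the length formula. What your approach buys is a proof readable without consulting \cite{BY}; the change-of-variables version moreover makes the constant $4=(1+1)^2$ transparent and shows it is attained only in the limit of cylinders accumulating at $y=1$.
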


\begin{proof}
  This is a special case of \cite[Lemma 3]{BY}.
\end{proof}

%

The distribution of $\log q_n$ obeys in the limit a Gaussian law.  It
was first proved by Ibragimov~\cite{Ibragimov1961}.  Then
Philipp~\cite[Satz 3]{Philipp1967} obtained an error term of
$O(n^{-1/5})$, which was later improved by
Mischyavichyus~\cite{Mischyavichyus1987} to $O(n^{-1/2} \log n )$.
Morita~\cite[Theorem 8.1]{Morita1994} obtained the optimal error term
of order $O(n^{-1/2})$; a different proof of the same bound was given
by Vall\'ee~\cite[Th\'eoreme 9]{Vallee1997}.  This allows for the
following lemma that ensures that there are many disjoint large
\cf-ary subintervals of relative order $n$ inside any given interval $I$.
This lemma is crucial for our construction.


We write $L$ for L\'evy's constant $\pi^2/(12 \log 2)$.

\begin{lemma}[\protect{\cite[Lemma 5]{BY}}]\label{lemma:cfgrande}
 There are positive constants $K, C$ and a positive integer $N_1$ 
such that for any \cf-ary interval $I$ 
and any  integer $n\geq N_1$,
 the Lebesgue measure of the union of the \cf-ary subintervals  $J$ of $I$ 
of relative order $n$ is such that 
\[
\frac{|I|}{4}e^{-2nL-2C}\leq |J| \leq  2|I|  e^{-2nL+2C}
\]
 is greater than 
\[
K|I|/\sqrt{n}.
\] 
\end{lemma}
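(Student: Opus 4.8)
The plan is to reduce the statement, via the distortion of the inverse branches of the Gauss map, to the central limit theorem for $\log q_n$: I will compare, with absolute constants, the length of a relative-order-$n$ \cf-ary subinterval of $I$ with the length of the corresponding cylinder of $[0,1)$, and then show that the band of cylinders on which $\log q_n$ stays within a constant of its typical value $nL$ has measure of order $1/\sqrt n$.

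First I would record the distortion estimate. Write $I = I_{[a_1,\dots,a_m]}$, let $q_m=q_m(a_1,\dots,a_m)$ and $q_{m-1}=q_{m-1}(a_1,\dots,a_m)$, and let $\phi$ be the branch of $(T^m)^{-1}$ carrying $[0,1)$ onto $I$, that is, the M\"obius map $\phi(y) = (p_m+y\,p_{m-1})/(q_m+y\,q_{m-1})$, whose derivative is $|\phi'(y)| = (q_m+y\,q_{m-1})^{-2}$. Since $q_{m-1}\le q_m$ one has $\tfrac{1}{4q_m^2}\le|\phi'(y)|\le \tfrac{1}{q_m^2}$ for $y\in[0,1)$, hence $\tfrac{1}{4q_m^2}\le|I|\le\tfrac{1}{q_m^2}$, and for every cylinder $I_{[c_1,\dots,c_n]}\subseteq[0,1)$ the set $J:=\phi(I_{[c_1,\dots,c_n]})$ — which is exactly the \cf-ary subinterval of $I$ of relative order $n$ with digit block $c_1,\dots,c_n$ — satisfies
\[
\tfrac14\,|I|\,\bigl|I_{[c_1,\dots,c_n]}\bigr| \ \le\ |J| \ \le\ 4\,|I|\,\bigl|I_{[c_1,\dots,c_n]}\bigr|.
\]
Combining this with $\tfrac{1}{2\tilde q_n^2}\le\bigl|I_{[c_1,\dots,c_n]}\bigr|\le\tfrac{1}{\tilde q_n^2}$, where $\tilde q_n:=q_n(c_1,\dots,c_n)$, one checks directly that if $C_1>0$ is any constant and $|\log\tilde q_n-nL|\le C_1$, then $|J|$ lies in the prescribed window $\tfrac{|I|}{4}e^{-2nL-2C}\le|J|\le2|I|e^{-2nL+2C}$ as soon as $C\ge C_1+\tfrac12\log 2$. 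So it suffices to fix such a $C$ and to bound from below the total length of the relative-order-$n$ subintervals $J$ of $I$ with $|\log\tilde q_n-nL|\le C_1$.

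Since $q_n(c_1,\dots,c_n)$ depends only on $c_1,\dots,c_n$, that total length is, by the displayed inequality, at least $\tfrac14|I|$ times the Lebesgue measure of $E_n:=\{y\in[0,1):|\log q_n(y)-nL|\le C_1\}$, which is a union of $n$-th order cylinders. Now I invoke the Gauss--Kuzmin central limit theorem for $\log q_n$ in the sharp form of Morita and Vall\'ee: there is $\eta>0$ with $\sup_{t\in\mathbb R}\bigl|\,\bigl|\{y\in[0,1):\log q_n(y)-nL\le \eta t\sqrt n\}\bigr| - \Phi(t)\,\bigr| = O(n^{-1/2})$, $\Phi$ the standard normal distribution function. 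Applying this at $t=\pm C_1/(\eta\sqrt n)$ and using $\Phi(u)-\Phi(-u)=\tfrac{2u}{\sqrt{2\pi}}+O(u^3)$ gives, for an absolute constant $c_\ast$,
\[
|E_n| \ \ge\ \Phi\!\Bigl(\tfrac{C_1}{\eta\sqrt n}\Bigr)-\Phi\!\Bigl(-\tfrac{C_1}{\eta\sqrt n}\Bigr)-\tfrac{2c_\ast}{\sqrt n} \ =\ \frac{2C_1}{\eta\sqrt{2\pi}}\cdot\frac{1}{\sqrt n}-\frac{2c_\ast}{\sqrt n}+O(n^{-3/2}).
\]
Choosing $C_1$ large enough that $\tfrac{2C_1}{\eta\sqrt{2\pi}}>4c_\ast$, and then $N_1$ large enough, yields $|E_n|\ge c/\sqrt n$ for some constant $c>0$ and all $n\ge N_1$; taking $K:=c/4$ and $C:=C_1+\tfrac12\log 2$ completes the argument.

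The main obstacle is this last quantitative step. The length window for $J$ has logarithmic width $O(1)$, so $E_n$ is a band of width $\asymp n^{-1/2}$ around the typical value $nL$ of $\log q_n$, and its measure has main term of order $n^{-1/2}$ — the same order as the Berry--Esseen error in the central limit theorem. A lower bound of the required shape $K|I|/\sqrt n$ can therefore be extracted only because the error term is $O(n^{-1/2})$ with an absolute implied constant; the weaker $O(n^{-1/2}\log n)$ would force the band, hence the constant $C$, to grow with $n$. This is why the optimal bound of Morita and Vall\'ee is needed, and it is also the reason this lemma is singled out as crucial in \cite{BY}. The remaining ingredients — the distortion estimate for inverse branches of $T$ and the identification of relative-order subintervals of $I$ with cylinders of $[0,1)$ — are routine continued-fraction bookkeeping; indeed this is exactly \cite[Lemma 5]{BY}.
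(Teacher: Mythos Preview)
Your proposal is correct and follows precisely the approach indicated by the paper. The paper does not give its own proof of this lemma but cites \cite[Lemma~5]{BY} and, in the paragraph preceding the statement, explains that the lemma rests on the central limit theorem for $\log q_n$ with the optimal Berry--Esseen error $O(n^{-1/2})$ of Morita and Vall\'ee; your argument --- bounded distortion of the inverse Gauss branches to pass from $I$ to $[0,1)$, then the quantitative CLT on a band of width $O(1)$ around $nL$, with $C_1$ chosen large enough to beat the $O(n^{-1/2})$ error constant --- is exactly that, and your remark that the weaker $O(n^{-1/2}\log n)$ bound would not suffice matches the paper's emphasis on the optimal rate.
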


\subsubsection{On the size of continued fraction intervals with large discrepancy}

The following result on large deviations is essentially Kifer, Peres and Weiss'
Corollary~3.2 in~\cite{KiferPeresWeiss2001} but conditioning the first~$r$ terms. 

\begin{lemma}[\protect{\cite[Lemma 6]{BY}}] \label{lemma:KPWc}
 Let $I_{[a_1,\dotsc,a_r]}$ be a \cf-ary interval, and let
 $\mathbf{v}=v_1\dotsc v_k$ be a word of length $k$ over the alphabet $\N$.
Then for every positive real $\delta$ and for every positive integer~$n$,
\[\left|\left\{x\in I_{[a_1,\ldots,a_r]}\colon
      \D{\cf-ary}_{\mathbf{v},n}(T^rx)>\delta\right\}\right|
  \leq 6 M e^{-\frac{\delta^2n}{2M}} |I_{[a_1\dotsc,a_r]}|, 
\]
 where 
\[
M= M(\delta, k) = \Big\lceil k - \log \left({\delta^2}/(2 \log 2) \right ) \Big\rceil.
\]
Recall that $T$ is the Gauss map.
\end{lemma}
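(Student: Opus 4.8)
The plan is to reduce the inequality to its case $r=0$, which is exactly Kifer, Peres and Weiss' Corollary~3.2 in~\cite{KiferPeresWeiss2001}, and then to transport that estimate into the cylinder $I_{[a_1,\dotsc,a_r]}$ by the change of variables furnished by the inverse branches of the Gauss map. To this end I would set
\[
E=\left\{y\in[0,1]\colon \D{\cf-ary}_{\mathbf{v},n}(y)>\delta\right\}.
\]
Since $a_j(T^rx)=a_{r+j}(x)$ and $\D{\cf-ary}_{\mathbf{v},n}(y)$ depends only on the digits $a_1(y),\dotsc,a_{n+k-1}(y)$, the set $E$ is a union of \cf-ary intervals of order $n+k-1$ (in particular measurable), and
\[
\left\{x\in I_{[a_1,\dotsc,a_r]}\colon \D{\cf-ary}_{\mathbf{v},n}(T^rx)>\delta\right\}=I_{[a_1,\dotsc,a_r]}\cap T^{-r}E.
\]
For $r=0$ the right-hand side is $E$ itself, with $|I_{[a_1,\dotsc,a_r]}|=1$, and the desired bound is precisely~\cite[Corollary~3.2]{KiferPeresWeiss2001}; the value of $M=M(\delta,k)$ in the statement is the one it produces.

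For $r\ge1$ I would use that $T^r$ restricted to $I_{[a_1,\dotsc,a_r]}$ is a monotone bijection onto $(0,1)$ whose inverse is the Möbius map $y\mapsto x=(p_r+p_{r-1}y)/(q_r+q_{r-1}y)$, where $p_i=p_i(x)$ and $q_i=q_i(x)$ are the convergents of $[a_1,\dotsc,a_r]$; differentiating and using $p_{r-1}q_r-p_rq_{r-1}=(-1)^r$ gives $\left|{\rm d}x/{\rm d}y\right|=(q_r+q_{r-1}y)^{-2}$. Hence
\[
\left|I_{[a_1,\dotsc,a_r]}\cap T^{-r}E\right|=\int_E\frac{{\rm d}y}{(q_r+q_{r-1}y)^2},\qquad
\left|I_{[a_1,\dotsc,a_r]}\right|=\int_0^1\frac{{\rm d}y}{(q_r+q_{r-1}y)^2}=\frac{1}{q_r(q_r+q_{r-1})},
\]
the last equality agreeing with the length formula recalled earlier in the excerpt. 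Because $q_{r-1}\le q_r$, the integrand attains its maximum $q_r^{-2}$ at $y=0$, and $q_r^{-2}=\frac{q_r+q_{r-1}}{q_r}\left|I_{[a_1,\dotsc,a_r]}\right|\le 2\left|I_{[a_1,\dotsc,a_r]}\right|$; therefore
\[
\left|I_{[a_1,\dotsc,a_r]}\cap T^{-r}E\right|\le q_r^{-2}\left|E\right|\le 2\left|E\right|\left|I_{[a_1,\dotsc,a_r]}\right|.
\]
Feeding in the $r=0$ estimate for $\left|E\right|$ from~\cite{KiferPeresWeiss2001} then yields a bound of the form $6Me^{-\delta^2 n/(2M)}\left|I_{[a_1,\dotsc,a_r]}\right|$, the factor $6$ being chosen to absorb, on top of the constant in~\cite{KiferPeresWeiss2001}, the factor at most $2$ lost in the change of variables.

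The combinatorial part (identifying $E$ and $I_{[a_1,\dotsc,a_r]}\cap T^{-r}E$) and the two elementary integrals are routine; the point to get right will be the bookkeeping of constants in the last step, namely that the distortion of the inverse branches of $T^r$ on an order-$r$ cylinder is bounded uniformly in $r$ and in the cylinder (the computation above gives the bound $2$), so that~\cite[Corollary~3.2]{KiferPeresWeiss2001} still fits under $6Me^{-\delta^2 n/(2M)}$. An alternative that avoids any loss is to re-run the large deviation argument of~\cite{KiferPeresWeiss2001} for the shifted digit sequence $(a_{r+1},a_{r+2},\dotsc)$, i.e.\ conditionally on $I_{[a_1,\dotsc,a_r]}$, which reproduces the constant intrinsically but is longer; the reduction sketched above is the cleaner route.
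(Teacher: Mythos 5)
The paper itself offers no proof of this lemma: it is imported verbatim from \cite[Lemma 6]{BY}, and the derivation there from \cite[Corollary 3.2]{KiferPeresWeiss2001} is essentially the reduction you describe, so your route is the right one. Your two computations are correct: since the discrepancy in question depends only on the digits $a_{r+1},\dotsc,a_{r+n+k-1}$, the set to be measured is indeed $I_{[a_1,\dotsc,a_r]}\cap T^{-r}E$ with $E$ a finite union of cylinders of order $n+k-1$; and the inverse branch $y\mapsto (p_r+p_{r-1}y)/(q_r+q_{r-1}y)$ has $|{\rm d}x/{\rm d}y|=(q_r+q_{r-1}y)^{-2}$, whose supremum $q_r^{-2}$ equals $\frac{q_r+q_{r-1}}{q_r}\,|I_{[a_1,\dotsc,a_r]}|\le 2\,|I_{[a_1,\dotsc,a_r]}|$, giving the uniform distortion bound $|I_{[a_1,\dotsc,a_r]}\cap T^{-r}E|\le 2\,|E|\,|I_{[a_1,\dotsc,a_r]}|$ independently of $r$ and of the cylinder. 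The one genuine loose end is the constant accounting in the last step, and it involves more than the factor $2$ you mention: the large-deviation estimate of Kifer, Peres and Weiss is stated for the invariant (Gauss) measure, so passing from $\mu(E)$ to $|E|$ costs an additional factor $2\log 2$ coming from the density bound $\tfrac{1}{2\log 2}\le\tfrac{1}{\log 2}\tfrac{1}{1+x}\le\tfrac{1}{\log 2}$; the product of that factor, your distortion factor $2$, and the constant appearing in \cite[Corollary 3.2]{KiferPeresWeiss2001} must be checked to fit under $6M$. This is exactly the bookkeeping performed in \cite{BY}, and nothing in the present paper's use of the lemma depends on the precise value $6$ (only on having some explicit constant), so once that verification is written out your proof is complete.
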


\subsubsection{On the Discrepancy associated to continued fraction expansions}
If $\mathbf{v}$ and $\mathbf{u}$ are words, we write $\mathbf{vu}$
for their concatenation. Then the following lemma describes the change
of the discrepancy if we concatenate two words.

\begin{lemma}[\protect{\cite[Lemma 7]{BY}}]\label{lemma:Dcf}
Let   $\mathbf{w}=a_1\ldots a_n$, $\mathbf{u}= b_1\ldots b_s$ and
$\mathbf{v}=v_1\ldots v_k$ be finite words over the alphabet
$\N$. Furthermore, let $0<\epsilon<1$. Then,
\medskip

\begin{enumerate}
\item if $\D{\cf-ary}_{\mathbf{v},n}(\mathbf{w})<\epsilon$ and
  $\D{\cf-ary}_{\mathbf{v},s}(\mathbf{u})<\epsilon - (k-1)/s$ then
$\D{\cf-ary}_{\mathbf{v},n+s}(\mathbf{wu})<\epsilon ;$
\medskip

\item  if   $\D{\cf-ary}_{\mathbf{v},n}(\mathbf{w})<\epsilon$ and
$s/n<\epsilon$ then 
\medskip

\begin{enumerate}
\item for every $1\leq \ell\leq s$,
 $\D{\cf-ary}_{\mathbf{v},n+\ell}(\mathbf{wu})<2\epsilon$
 and
\medskip

\item 
$\D{\cf-ary}_{\mathbf{v},n+s}(\mathbf{uw})< 2\epsilon.$

\end{enumerate}
\end{enumerate}
\end{lemma}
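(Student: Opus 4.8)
The plan is to recast every discrepancy as an occurrence count and then do elementary bookkeeping. For a finite word $\mathbf{x}$ and $m\le|\mathbf{x}|$ write $N(\mathbf{x},m)$ for the number of indices $j$ with $1\le j\le m$ and $j+k-1\le|\mathbf{x}|$ at which the block $v_1\cdots v_k$ occurs in $\mathbf{x}$, so that $\D{\cf-ary}_{\mathbf{v},m}(\mathbf{x})=\bigl|N(\mathbf{x},m)/m-\mu_{\mathbf v}\bigr|$ with $\mu_{\mathbf v}:=\mu(I_{[v_1,\ldots,v_k]})$. The one combinatorial fact needed is that, when a word is cut into a left part and a right part, every occurrence of $\mathbf{v}$ either lies entirely in the left part, lies entirely in the right part, or starts at one of the at most $k-1$ positions straddling the cut.

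For item~(1), $\mathbf{wu}$ has length $n+s$, and I split the occurrences counted by $N(\mathbf{wu},n+s)$ into the $N(\mathbf{w},n)$ of them lying in $\mathbf{w}$, the $N(\mathbf{u},s)$ lying in $\mathbf{u}$, and a straddling remainder $\theta$ with $0\le\theta\le k-1$. Writing $N(\mathbf{w},n)=n(\mu_{\mathbf v}+\delta_1)$ with $|\delta_1|<\epsilon$ and $N(\mathbf{u},s)=s(\mu_{\mathbf v}+\delta_2)$ with $|\delta_2|<\epsilon-(k-1)/s$, the frequency of $\mathbf{v}$ among the first $n+s$ positions of $\mathbf{wu}$ equals $\mu_{\mathbf v}+(n\delta_1+s\delta_2+\theta)/(n+s)$, and the triangle inequality bounds the discrepancy by $\bigl(n\epsilon+(s\epsilon-(k-1))+(k-1)\bigr)/(n+s)=\epsilon$. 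The mechanism is transparent: the slack $(k-1)/s$ demanded of $\mathbf{u}$ is exactly what pays for the at most $k-1$ straddling occurrences.

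For item~(2) nothing is assumed about $\mathbf{u}$, so instead I use that concatenation creates few new occurrences. In (2a), for $1\le\ell\le s$: every occurrence counted by $N(\mathbf{w},n)$ starts within the first $n-k+1$ letters and hence is still counted by $N(\mathbf{wu},n+\ell)$, while every additional occurrence starts at one of the positions $n-k+2,\ldots,n+\ell$ whose block fits inside $\mathbf{wu}$ — and there are at most $s$ such positions. Thus $N(\mathbf{wu},n+\ell)=N(\mathbf{w},n)+\eta$ with $0\le\eta\le s$, and since $|N(\mathbf{w},n)-n\mu_{\mathbf v}|<n\epsilon$ and $|\eta-\ell\mu_{\mathbf v}|\le s$, one gets $\bigl|N(\mathbf{wu},n+\ell)/(n+\ell)-\mu_{\mathbf v}\bigr|\le(n\epsilon+s)/(n+\ell)<2\epsilon$ using $s/n<\epsilon$. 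Item (2b) is the mirror statement for $\mathbf{uw}$: its last $n-k+1$ letters contribute exactly $N(\mathbf{w},n)$ occurrences while its first $s$ positions contribute at most $s$ more, so again $N(\mathbf{uw},n+s)=N(\mathbf{w},n)+\eta$ with $0\le\eta\le s$ and the same estimate gives a bound below $2\epsilon$.

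The arithmetic is routine; the points deserving care are the boundary accounting and the convention for finite words. One must count only blocks that fit entirely inside the concatenated word — this is what caps the number of new occurrences at $s$ in item~(2) — and one must fix, in $\D{\cf-ary}_{\mathbf{v},m}(\mathbf{x})$ with $|\mathbf{x}|=m$, that a start position $j>m-k+1$ contributes nothing. With that convention all the pieces line up, but it is the sort of off-by-$k$ detail that is easy to get wrong, and I expect it to be the main (if modest) obstacle.
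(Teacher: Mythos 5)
Your proof is correct. The paper itself gives no proof of this lemma --- it is quoted verbatim from \cite[Lemma 7]{BY} --- and your elementary occurrence-counting argument (splitting occurrences in a concatenation into those internal to each factor plus at most $k-1$ straddling ones, with the $(k-1)/s$ slack in item (1) absorbing exactly the straddling term, and the fit-inside-the-word convention capping the new occurrences at $s$ in item (2)) is precisely the standard argument behind the cited result, with the boundary bookkeeping handled correctly.
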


\subsubsection{On the length of $b$-ary subintervals}

For any integer $b$ greater than or equal to $2$, 
 we say that an interval  $I$ is $b$-ary of order $k$, if it is of the form
\[I=\left( 
\frac{a}{b^k}, \frac{a+1}{b^k}\right)
\]
for some positive integer $k$ and an integer $a$ with $0\leq a< b^k$.
We write $order_b(I) = k$.  If $I$ is a union of two consecutive
$b$-ary intervals of the same order,
$I = \left(\frac{a}{b^k}, \frac{a+2}{b^k}\right)$, we also write
$order_b(I) = k$. We drop the index $b$ if the base is clear.
The following is a trivial fact about lengths of $b$-ary subintervals.

\begin{lemma}\label{lemma:ladrillo}
  Let $b \geq 2$ and $m \in \mathbb N$. Every interval $I$ whose
  Lebesgue measure is less than $b^{-m}$ is contained in a $b$-ary
  interval of order $m$ or in the union of two such intervals.
\end{lemma}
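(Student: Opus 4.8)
The statement is elementary, so the plan is to reduce the geometric claim about an interval $I$ with $|I| < b^{-m}$ to a simple counting argument on the endpoints of the $b$-ary partition of order $m$. First I would fix the base $b \geq 2$ and the order $m \in \mathbb{N}$, and recall that the $b$-ary intervals of order $m$, namely $\left(\frac{a}{b^m}, \frac{a+1}{b^m}\right)$ for $0 \leq a < b^m$, partition the unit interval into $b^m$ pieces each of length exactly $b^{-m}$. Writing $I = (\alpha, \beta)$ with $\beta - \alpha = |I| < b^{-m}$, the key observation is that since $I$ has length strictly less than the common length $b^{-m}$ of the partition cells, the open interval $I$ can meet at most two of these cells: if it met three or more, it would have to fully contain the middle one, forcing $|I| \geq b^{-m}$, a contradiction.

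Then I would make the "at most two cells" statement precise via the integer $a := \lfloor \alpha b^m \rfloor$ (taking $a = 0$ if $\alpha < 0$, which does not occur here since $I$ lies in the unit interval), so that $\alpha \in \left[\frac{a}{b^m}, \frac{a+1}{b^m}\right)$. Since $\beta = \alpha + |I| < \frac{a+1}{b^m} + b^{-m} = \frac{a+2}{b^m}$, the whole interval $I$ is contained in $\left[\frac{a}{b^m}, \frac{a+2}{b^m}\right)$, i.e. in the union of the two consecutive $b$-ary intervals of order $m$ indexed by $a$ and $a+1$ (or in just the one indexed by $a$ if already $\beta \leq \frac{a+1}{b^m}$). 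This is exactly the conclusion, using that $\mathrm{order}_b$ of such a union is defined to be $m$.

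The only mild subtlety is bookkeeping at the boundary: one must check the edge case where $\alpha$ coincides with a partition point $\frac{a}{b^m}$ and the case $a+1 = b^m$ (so the second cell would exceed the unit interval). In the first case $I$ is still contained in the union of cells $a$ and $a+1$ (or cell $a$ alone), and in the second case $|I| < b^{-m}$ together with $\alpha \geq \frac{b^m - 1}{b^m}$ forces $I$ to sit inside the single cell indexed by $b^m - 1$, so no two-cell union is needed. I do not anticipate any real obstacle here — the statement is called "trivial" in the text and the argument is a two-line pigeonhole once the floor function is introduced; the write-up should simply be careful to state which of the two alternatives (single interval versus union of two) occurs and to handle the endpoint degeneracies cleanly.
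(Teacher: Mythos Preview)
Your proposal is correct: the floor-function argument $a = \lfloor \alpha b^m \rfloor$ immediately gives $I \subset \left(\frac{a}{b^m}, \frac{a+2}{b^m}\right)$, which is the claim. The paper does not give a proof at all --- it simply labels the lemma ``a trivial fact about lengths of $b$-ary subintervals'' and moves on --- so your write-up is already more than what the paper provides, and there is nothing to compare against.
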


\subsubsection{On the number of $b$-ary words with large discrepancy}
In   the construction we use  the following  classical
bound for the number of blocks of a given length 
having larger discrepancy than a given value, 
see~\cite[Theorem 148]{HardyWright2008}
or~\cite[Lemma 2.5]{poly}.

\begin{lemma}[Bernstein inequality]\label{lemma:BHS}
  Let $I_{a_1,\ldots,a_r}$ be a $b$-ary interval
  For every positive integer* $n$ and for every real* $\delta$ such that $6/n \leq \delta \leq 1/b$ we have
\[
\left|\left\{x\in I_{a_1,\ldots,a_r}\colon
      \D{b-ary}_{n}(S_b^rx)>\delta\right\}\right|
  \leq 2b^{n+1}e^{-b\delta^2n/6} |I_{a_1\dotsc,a_r}|.
\]
Recall $S_bx= bx-\lfloor bx\rfloor$.
\end{lemma}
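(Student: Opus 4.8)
The plan is to reduce the statement, via the piecewise linearity of $S_b^r$ on the $b$-ary interval $I_{a_1,\ldots,a_r}$, to a purely combinatorial large-deviation count of the length-$n$ words over $\mathcal N_b$ whose simple discrepancy exceeds $\delta$, and then to dispatch that count by the classical Bernstein estimate already quoted after the statement. First I would note that the restriction of $S_b^r$ to $I_{a_1,\ldots,a_r}=(a/b^r,(a+1)/b^r)$, with $a=a_1b^{r-1}+\cdots+a_r$, is (up to a null set) the affine bijection $x\mapsto b^rx-a$ onto $(0,1)$, with constant Jacobian $b^r=1/|I_{a_1,\ldots,a_r}|$, and that the first $n$ digits of $S_b^rx$ are precisely the digits $a_{r+1},\ldots,a_{r+n}$ of $x$. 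Hence $\D{b-ary}_{n}(S_b^rx)$ depends only on that block, which ranges uniformly over $\mathcal N_b^{\,n}$ as $x$ ranges over $I_{a_1,\ldots,a_r}$, and the change of variables gives
\[
\left|\left\{x\in I_{a_1,\ldots,a_r}\colon \D{b-ary}_{n}(S_b^rx)>\delta\right\}\right|
=|I_{a_1,\ldots,a_r}|\cdot\left|\left\{y\in(0,1)\colon \D{b-ary}_{n}(y)>\delta\right\}\right|.
\]
The set on the right is a union of $b$-ary cylinders of order $n$, each of measure $b^{-n}$, so its measure equals $b^{-n}N$ with $N=\#\{\mathbf w\in\mathcal N_b^{\,n}\colon \D{b-ary}_{n}(\mathbf w)>\delta\}$; it therefore suffices to show $N\leq 2b^{n+1}e^{-b\delta^2 n/6}$.

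Next I would split by digit. Since $\D{b-ary}_{n}(\mathbf w)=\max_{v\in\mathcal N_b}\D{b-ary}_{v,n}(\mathbf w)$, a word is counted in $N$ only if $\D{b-ary}_{v,n}(\mathbf w)>\delta$ for some $v\in\mathcal N_b$, so by a union bound $N\leq\sum_{v\in\mathcal N_b}N_v$ where $N_v=\#\{\mathbf w\in\mathcal N_b^{\,n}\colon \D{b-ary}_{v,n}(\mathbf w)>\delta\}$. For a fixed $v$, the ratio $N_v/b^n$ is exactly the probability that a sum of $n$ independent Bernoulli trials of mean $1/b$ deviates from its mean $n/b$ by more than $\delta n$. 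The variance of each trial is at most $1/b$ and each trial deviates from its mean by at most $1$, so Bernstein's inequality, together with the hypothesis $\delta\leq 1/b$ (which gives $1/b+\delta/3\leq 4/(3b)$), yields
\[
\frac{N_v}{b^n}\leq 2\exp\left(-\frac{\delta^2 n}{2\left(1/b+\delta/3\right)}\right)\leq 2\exp\left(-\frac{3b\delta^2 n}{8}\right)\leq 2\exp\left(-\frac{b\delta^2 n}{6}\right).
\]
Summing over the $b$ digits $v$ gives $N\leq 2b^{n+1}e^{-b\delta^2 n/6}$, and multiplying by $|I_{a_1,\ldots,a_r}|$ through the reduction above completes the proof. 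Alternatively, one may invoke the elementary binomial-coefficient estimate of~\cite[Theorem 148]{HardyWright2008} (or~\cite[Lemma 2.5]{poly}) verbatim for the bound on $N_v$; that version is where the hypothesis $6/n\leq\delta$ is used, to guarantee that $\delta n$ is large enough for the crude bounds on the binomial tail to apply.

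I expect the only point requiring any care to be the constant in the exponent: one must verify that the chosen tail bound, combined with $\delta\leq 1/b$ (and, in the elementary route, $\delta\geq 6/n$), indeed produces the factor $e^{-b\delta^2 n/6}$ and not a weaker constant. The change of variables, the count of cylinders, and the union over digits are all routine.
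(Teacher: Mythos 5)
Your proof is correct, but note that the paper itself offers no proof of this lemma: it is quoted as a classical counting bound with a pointer to \cite[Theorem 148]{HardyWright2008} and \cite[Lemma 2.5]{poly}, where the estimate is obtained by elementary manipulation of binomial coefficients (Stirling-type bounds on the binomial tail), and that elementary route is exactly where the hypothesis $6/n\leq\delta$ is consumed. Your reduction --- affine change of variables on the cylinder, identification of the bad set as a union of order-$n$ cylinders, union bound over the $b$ digits --- is the same skeleton, but you replace the combinatorial tail estimate by the probabilistic Bernstein inequality for Bernoulli$(1/b)$ sums. The computation checks out: with variance proxy $1/b$ and $\delta\leq 1/b$ one gets exponent $\delta^2 n/(2(1/b+\delta/3))\geq 3b\delta^2 n/8\geq b\delta^2 n/6$, so your route yields the stated constant and, as you observe, does not even need $\delta\geq 6/n$. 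Two small bookkeeping remarks. First, your final step is looser than your own reduction: having shown the \emph{count} satisfies $N\leq 2b^{n+1}e^{-b\delta^2 n/6}$, the measure is $b^{-n}N\,|I_{a_1,\ldots,a_r}|\leq 2b\,e^{-b\delta^2 n/6}|I_{a_1,\ldots,a_r}|$, which is stronger by a factor $b^{n}$ than what the lemma asserts; saying you ``multiply by $|I_{a_1,\ldots,a_r}|$'' elides the $b^{-n}$ but only in the harmless direction. (This stronger form is in fact what the paper silently uses later, where the bad-zone measure is bounded by $6b\,e^{-b\epsilon^2 n_b/6}$ times the length of the ambient interval, i.e.\ $2b$ times the neighbour factor $3$.) Second, the identification of the bad set with a union of cylinders should be stated up to a null set because of non-unique $b$-ary representations at endpoints, which you implicitly do. Neither point affects correctness.
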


\subsubsection{On the Discrepancy associated to $b$-ary expansions} \label{sec:integer}
Since there are only finitely many digits in the $b$-ary expansion the
bounds for the discrepancy are easier in that case. 

\begin{lemma}[\protect{\cite[Lemma 3.1]{poly}}]\label{lemma:D}
  Let $\mathbf{u}$ and $\mathbf{v}$ be blocks in base~$b$ and  let $\epsilon > 0$.

\begin{enumerate}
 \item  If  $\D{b-ary}_{|\mathbf{u}|}(\mathbf{u})<\epsilon$ and
$\D{b-ary}_{|\mathbf{v}|}(\mathbf{v})<\epsilon$, then $\D{b-ary}_{|\mathbf{uv}|}(\mathbf{uv})< \epsilon$.

 \item If $\D{b-ary}_{|\mathbf{v}|}(\mathbf{v})<\epsilon$ and  $|\mathbf{u}|/|\mathbf{v}|<\epsilon$, then 
 \begin{enumerate}
 \item for every $\ell$ less than or equal to $|\mathbf{u}|$, 
 $\D{b-ary}_{|\mathbf{v}|+\ell}( \mathbf{v u} )< 2\epsilon.$
 \item $\D{b-ary}_{|\mathbf{v}|+|\mathbf{u}|}(\mathbf{uv} )< 2\epsilon$. 
\end{enumerate}
\end{enumerate}
\end{lemma}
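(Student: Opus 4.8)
The plan is to argue entirely at the level of digit-occurrence counts. For a word $\mathbf{w}=d_1\cdots d_n$ over $\mathcal{N}_b$ and a digit $v\in\mathcal{N}_b$ write $N_v(\mathbf{w})=\#\{1\le j\le n\colon d_j=v\}$, so that $\D{b-ary}_{v,n}(\mathbf{w})=\left|N_v(\mathbf{w})/n-1/b\right|$ and $\D{b-ary}_{n}(\mathbf{w})=\max_{v\in\mathcal{N}_b}\D{b-ary}_{v,n}(\mathbf{w})$. The only structural input is that counts add under concatenation: $N_v(\mathbf{uv})=N_v(\mathbf{u})+N_v(\mathbf{v})$, and, for $0\le\ell\le|\mathbf{u}|$, the count of $v$ in the length-$(|\mathbf{v}|+\ell)$ prefix of $\mathbf{vu}$ equals $N_v(\mathbf{v})+N_v(\mathbf{u}')$, where $\mathbf{u}'$ denotes the length-$\ell$ prefix of $\mathbf{u}$. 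Each item then follows by splitting the relevant frequency into a part coming from $\mathbf{v}$ (resp.\ $\mathbf{u}$) and applying the triangle inequality.

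For item (1), fix $v\in\mathcal{N}_b$ and set $m=|\mathbf{u}|$, $n=|\mathbf{v}|$. By hypothesis $\left|N_v(\mathbf{u})-m/b\right|<\epsilon m$ and $\left|N_v(\mathbf{v})-n/b\right|<\epsilon n$; adding these and using additivity of counts gives $\left|N_v(\mathbf{uv})-(m+n)/b\right|<\epsilon(m+n)$. Dividing by $m+n$ and taking the maximum over $v$ yields $\D{b-ary}_{m+n}(\mathbf{uv})<\epsilon$.

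For item (2a), fix $v\in\mathcal{N}_b$ and an integer $\ell$ with $1\le\ell\le m:=|\mathbf{u}|$, put $n=|\mathbf{v}|$, and let $\mathbf{u}'$ be the length-$\ell$ prefix of $\mathbf{u}$. Using additivity of counts, the triangle inequality, the trivial bound $\left|N_v(\mathbf{u}')-\ell/b\right|\le\ell$, the hypothesis $\D{b-ary}_{n}(\mathbf{v})<\epsilon$ together with $n+\ell\ge n$, and finally $\ell\le m$ with $m/n=|\mathbf{u}|/|\mathbf{v}|<\epsilon$, one obtains
\[
\left|\frac{N_v(\mathbf{v})+N_v(\mathbf{u}')}{n+\ell}-\frac1b\right|
\le\frac{\left|N_v(\mathbf{v})-n/b\right|+\left|N_v(\mathbf{u}')-\ell/b\right|}{n+\ell}
<\frac{\epsilon n+\ell}{n}=\epsilon+\frac{\ell}{n}\le\epsilon+\frac{m}{n}<2\epsilon.
\]
Taking the maximum over $v$ gives $\D{b-ary}_{n+\ell}(\mathbf{vu})<2\epsilon$. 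Item (2b) is then immediate: for every digit $v$ one has $N_v(\mathbf{uv})=N_v(\mathbf{vu})$, and the words $\mathbf{uv}$ and $\mathbf{vu}$ both have length $n+m$, so $\D{b-ary}_{n+m}(\mathbf{uv})=\D{b-ary}_{n+m}(\mathbf{vu})$, which is the $\ell=m$ case of (2a).

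There is no genuine obstacle; the proof is pure bookkeeping. The single point requiring a little care is the split in item (2a): the triangle inequality must be set up so that the contribution from $\mathbf{v}$ is bounded using $\D{b-ary}_{n}(\mathbf{v})<\epsilon$ (after replacing the denominator $n+\ell$ by the smaller $n$) and the contribution from the appended block is bounded using the length hypothesis $|\mathbf{u}|/|\mathbf{v}|<\epsilon$, so that the two pieces together stay strictly below $2\epsilon$.
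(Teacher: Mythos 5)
Your proof is correct, and the counting/triangle-inequality argument is exactly the standard one; note that the paper itself gives no proof of this lemma, citing it as \cite[Lemma 3.1]{poly}, so your write-up simply supplies the expected elementary bookkeeping. The one step worth the care you gave it is indeed the split in (2a), where the denominator $n+\ell$ is replaced by $n$ before invoking $\D{b-ary}_{|\mathbf{v}|}(\mathbf{v})<\epsilon$ and $|\mathbf{u}|/|\mathbf{v}|<\epsilon$, and that is handled correctly.
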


\section{Proof of Theorem \ref{thm}}

We split the proof into three parts. First we construct $x$ and
$y:=1/x-\lfloor 1/x\rfloor$. Secondly we prove that $x$ and $1/x$ are
both continued fraction normal and absolutely normal. Finally we show
that both numbers are computable.

\subsection{The construction}

Iteratively we define two sequences of refinements of $t$-bricks
$\vec{\sigma}_1, \vec{\sigma}_2, \vec{\sigma}_3,\ldots$ and
$\vec{\Sigma}_1, \vec{\Sigma}_2, \vec{\Sigma}_3,\ldots$ for
non-decreasing values of~$t$. The intersection of all the intervals in
the first sequence defines the number $x$, whereas the intersection of
all the intervals in the second sequence defines the number $y$.

Before starting with the actual construction we provide a lemma
ensuring that the sequence of refinements of $t$-bricks exists.

\subsubsection{The refinement lemma}

\begin{lemma}\label{lemma:main}
Let $t$  be a positive integer
 greater than or equal to $1$, 
let $\epsilon$ be a positive real  less than~$1/t$
and let $t'$ be an integer equal to $t$ or to $t+1$.
Then, 
there is an integer  function $n_0=n_0(t, \epsilon)$ such that for every $n\geq  n_0$
and  
there are positive integers  $\ell_1, \ldots \ell_n$  such that 
 for any pair $t$-bricks 
$(\sigma_{\cf},\sigma_2,\dotsc,\sigma_t)$ and
$(\Sigma_{\cf},\Sigma_2,\dotsc,\Sigma_t)$ there are  refinements
$ (\tau_{\cf},\tau_2,\dotsc,\tau_{t'})$ and
$(\Tau_{\cf},\Tau_2,\dotsc,\Tau_{t'})$,
 both with discrepancy less than $\epsilon(s)$
satisfying the following:
\begin{align*}
\text{If }&\sigma_{\cf}=[1,a_2, \ldots, a_N] \text{  and } \Sigma_{\cf}=[a_2, \ldots, a_N] \text{   then }
\\
& \tau_{\cf} = [1,a_2, \ldots a_N, \ell_1, \ldots, \ell_n]
\text{ and } \Tau_{\cf} = [a_2, \ldots a_N, \ell_1, \ldots, \ell_n].
\end{align*}
\end{lemma}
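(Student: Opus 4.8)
The idea is to run the Becher--Yuhjtman single-interval refinement argument \emph{simultaneously} on the pair $(\sigma_{\cf},\Sigma_{\cf})$, exploiting the fact that $\Sigma_{\cf}=T(\sigma_{\cf})$ (the Gauss shift of $\sigma_{\cf}$), so that appending the \emph{same} block of partial quotients $\ell_1,\dots,\ell_n$ to both continued fractions is consistent. I would first fix $\epsilon' < 1/t'$ slightly below $\epsilon$, and choose the parameters $n$ and $\ell_1,\dots,\ell_n$ \emph{before} looking at the two bricks, so that the conclusion is uniform over all pairs of $t$-bricks; this is what makes the construction "non-revisiting". The block $\ell_1\cdots\ell_n$ will be a concatenation of three pieces: a long prefix that is $\mu$-generic in the sense of having $\D{\cf-ary}_{\mathbf v,\cdot}$ small for all words $\mathbf v$ of $t'$ digits $\le t'$ (such a finite block exists by Birkhoff/ergodicity, or one can build it explicitly as in \cite{BY}); a short connecting piece; and a tail whose role is only to make $\tau_{\cf}$ small enough to fit inside the required $b$-ary intervals. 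By Lemma~\ref{lemma:Dcf}(1) and (2), prepending the fixed (bounded-length) prefixes $[1,a_2,\dots,a_N]$ or $[a_2,\dots,a_N]$ to this generic block perturbs the \cf-discrepancy by at most $O(N/n)$, which is absorbed into the slack $\epsilon-(t-1)/|\mathbf w|$ in Definition~\ref{def:refinement}; choosing $n_0$ large enough kills this error.

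For the $b$-ary side, for each $b=2,\dots,t$ I use Lemma~\ref{lemma:cfgrande} to locate, inside $\sigma_b$ (resp.\ $\Sigma_b$), many disjoint \cf-ary subintervals of the right relative order whose total measure beats the measure of the "bad zone" for base $b$; the bad zone is controlled by Lemma~\ref{lemma:KPWc} for the \cf-ary discrepancy and by Lemma~\ref{lemma:BHS} (Bernstein) for each $b$-ary discrepancy. A counting argument — total good measure exceeds the sum over $b\le t$ of the bad measures, which is the standard measure-theoretic pigeonhole from \cite{poly,BY} — produces a \cf-ary interval $\tau_{\cf}$ that simultaneously (i) sits inside $\sigma_b$ for every $b$, (ii) has small \cf-discrepancy for the appended block, and (iii) is short enough that, by Lemma~\ref{lemma:ladrillo}, it lies in a $b$-ary interval (or a union of two) of the order needed to make $(\tau_{\cf},\tau_2,\dots,\tau_{t'})$ a genuine $t'$-brick with the brick inequality $|\tau_{\cf}|\ge |\tau_b|/(4\cdot16e^{4C}b)$. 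Here the constant $C$ from Lemma~\ref{lemma:cfgrande} is exactly why that awkward factor appears in the definition of a $t$-brick. Finally, $\tau_b$ and $\Tau_b$ are taken to be the $b$-ary interval (or union of two) of the chosen order containing $\tau_{\cf}$ and $\Tau_{\cf}$ respectively, and $\tau_{t'},\Tau_{t'}$ are left unconstrained when $t'=t+1$.

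The main obstacle — and the genuinely new point compared to \cite{BY} — is step (ii)--(iii) done \emph{jointly}: I must find a single block $\ell_1,\dots,\ell_n$ that works for $\tau_{\cf}\subset\sigma_{\cf}$ \emph{and} for $\Tau_{\cf}\subset\Sigma_{\cf}$, even though $\sigma_b$ and $\Sigma_b$ are unrelated $b$-ary intervals while $\sigma_{\cf}$ and $\Sigma_{\cf}$ differ by a shift. The resolution is that the \cf-discrepancy of the appended block is (almost) the same object for $x$ and for $1/x$ — differing only by the one extra/fewer leading digit, handled by the error term $(t-1)/|\mathbf w|$ — so one runs the measure-counting argument for $\tau_{\cf}$ inside $\sigma_{\cf}$, runs the \emph{same} argument for $\Tau_{\cf}$ inside $\Sigma_{\cf}$, and then intersects: one needs the set of admissible appended blocks $\ell_1\cdots\ell_n$ to be nonempty for \emph{both}, which follows because each admissible set has density close to $1$ (the bad measures are exponentially small in $n$ by Lemmas~\ref{lemma:KPWc} and~\ref{lemma:BHS}, while the good measure from Lemma~\ref{lemma:cfgrande} is only polynomially small, $K/\sqrt n$), so for $n\ge n_0$ the two large sets of good blocks must overlap. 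Lemma~\ref{lemma:cfrelation} is what guarantees that a good block for $\Sigma_{\cf}=[a_2,\dots,a_N]$ remains good, up to a factor~$4$ in length, for $\sigma_{\cf}=[1,a_2,\dots,a_N]$, so the $b$-ary brick inequalities survive the shift. I would carry out the measure bookkeeping with $\epsilon$ replaced throughout by a fixed fraction of itself to leave room for all the $O(N/n)$ and factor-$4$ losses, and then take $n_0(t,\epsilon)$ to be the threshold past which every inequality above holds.
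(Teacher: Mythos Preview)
Your paragraphs~2--3 contain the correct skeleton and match the paper's argument: one works inside $\sigma_{\cf}$, uses Lemma~\ref{lemma:cfgrande} to get a family $\mathcal I_n(\sigma_{\cf})$ of \cf-ary subintervals of relative order~$n$ with total measure $\ge K|\sigma_{\cf}|/\sqrt n$, bounds the ``bad zones'' (large $b$-ary discrepancy via Lemma~\ref{lemma:BHS}, large \cf-discrepancy via Lemma~\ref{lemma:KPWc}) by quantities exponentially small in~$n$, transfers the bad zones of the $\Sigma$-brick to the $\sigma$-side through the one-to-one correspondence $I_{[a_2,\dots,a_N,\ell_1,\dots,\ell_n]}\leftrightarrow I_{[1,a_2,\dots,a_N,\ell_1,\dots,\ell_n]}$ (paying the factor~$4$ from Lemma~\ref{lemma:cfrelation}), and then picks any surviving interval.

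However, your first paragraph is in genuine conflict with this and should be discarded. You cannot choose the block $\ell_1,\dots,\ell_n$ \emph{before} looking at the bricks: while the \cf-discrepancy of the appended word is indeed independent of $\sigma_{\cf}$ (it is literally the same word for both bricks, so your ``almost the same object'' is in fact exactly the same object, and no $O(N/n)$ correction is needed), the $b$-ary word corresponding to the inclusion $\tau_b\subset\sigma_b$ depends entirely on \emph{where} the \cf-subinterval $\tau_{\cf}$ lands inside $[0,1]$, not on the symbols $\ell_1,\dots,\ell_n$ alone. Two different bricks with the same appended \cf-digits will produce completely unrelated $b$-ary words, so no ``universal $\mu$-generic block'' can control the $b$-ary discrepancy for all bricks at once. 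Relatedly, $N$ is not bounded in the hypotheses of the lemma, so the phrase ``fixed (bounded-length) prefixes'' and the $O(N/n)$ bookkeeping are both incorrect; fortunately they are also unnecessary, since Definition~\ref{def:refinement} asks only about the discrepancy of the \emph{new} word, not of the full expansion.

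Two smaller corrections to paragraphs~2--3. First, the admissible set does \emph{not} have ``density close to~$1$'': $\mathcal I_n$ has relative measure only $\gtrsim K/\sqrt n$. The point is rather that the bad zones (for both bricks, summed over $b=2,\dots,t$ and over the \cf-condition) have relative measure $o(1/\sqrt n)$, so for $n\ge n_0$ they occupy, say, less than half of the measure of $\mathcal I_n(\sigma_{\cf})$; this is where the explicit choice of $n_0$ comes from. Second, the ``intersection of two large sets'' picture is slightly misleading: what one actually does is pull the $\Sigma$-bad zones back to the $\sigma$-side via the correspondence (losing a factor~$4$ in measure by Lemma~\ref{lemma:cfrelation}) and then runs a single pigeonhole inside $\mathcal I_n(\sigma_{\cf})$. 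The case $t'=t+1$ is then handled exactly as you say, by appending $\tau_{t+1},\Tau_{t+1}$ with no discrepancy requirement.
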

\medskip

\begin{proof} 
  First, we assume that $t'=t$.  \smallskip

{\em \  Towards the length of $\tau_{\cf}$ and $\Tau_{\cf}$.} 
 For a \cf-interval $\alpha$ and a positive integer $n$ consider $\mathcal I_n( \alpha)$ the finite set of i\cf-ary subintervals $A$ 
of   $\alpha$ of relative order $n$ such that
\begin{equation}\label{A}
  \frac{1}{4} e^{-2nL-2C} \leq \frac{|A|}{|\alpha|}\leq 2\ e^{-2nL+2C}.
\end{equation}
Let  $K, C, N_1$ be the constants provided by
Lemma~\ref{lemma:cfgrande}.
Then,  if $n \geq N_1$,
\begin{equation*}
\frac{  \left|\bigcup_{A \in \mathcal I_n(\alpha)}A \right|}{|\alpha|} \geq \frac{K}{\sqrt{n}}.
\end{equation*}
For each $n$, consider the sets $\mathcal I_n(\sigma_{\cf})$ and
$\mathcal I_n(\Sigma_{\cf})$. Note that by our choice of
$\sigma_{\cf}$ and $\Sigma_{\cf}$ these sets have the same cardinality
and there is a one-to-one correspondence between the elements by
adding the digit $1$ to those in the set $\mathcal{I}_n(\Sigma_{\cf})$.
At the end of the proof we will determine a value $n_0$ for $n$ and we will choose 
 $\tau_{\cf}$ in   $\cc I_n(\sigma_{\cf})$ and $\Tau_{\cf}$ in  $\cc I_n(\Sigma_{\cf})$ such that
\begin{equation*}\label{AT}
\frac{1}{4} e^{-2{n_{0}}L-2C} |\sigma_{\cf}|    \leq |\tau_{\cf}| \leq 2 e^{-2{n_{0}}L+2C} |\sigma_{\cf}| 
\end{equation*}
\begin{equation*}
\frac{1}{4} e^{-2{n_{0}}L-2C} |\sigma_{\cf}|  \leq   |\Tau_{\cf}| \leq 2 e^{-2{n_{0}}L+2C} |\Sigma_{\cf}|.
\end{equation*}
And  by Lemma~\ref{lemma:cfrelation} we have
\[ |\Tau_{\cf}|/4 \leq |\tau_{\cf}|\leq |\Tau_{\cf}|.
\]

{\em Towards the length of  $\tau_b$ and $\Tau_b$.}
For  each $b=2, \ldots, t$  we call 
\[
  m_b= order_b(\Tau_b) = order_b(\tau_b).
\] 
By the definition of a $t$-brick we have
\begin{equation}\label{eq0}
  |\Tau_{\cf}| \leq   b^{-m_b}.
\end{equation}
We choose $m_b$ as the largest integer such that 
\[
2e^{-2nL+2C} |\Sigma_{cf}| \leq b^{-m_b}.
\]
Thus
\[
b^{-m_b-1}< 2e^{-2nL+2C} |\Sigma_{cf}|.
\]
Using  the leftmost inequality in \eqref{A} we obtain,
 \begin{equation}
\label{B}
b^{-m_b-1} < 2e^{-2nL+2C} |\Sigma_{cf}|\  =8 e^{4C} \frac{1}{4} e^{-2nL-2C} |\Sigma_{cf}| \leq 8 e^{4C} |I|.
\end{equation}

For every $i\in \mathcal I_n(\sigma_{\cf})$ and for the corresponding
$I \in \mathcal I_n(\Sigma_{\cf})$  we have
\[
|I|/4\leq |i|\leq |I| 
\]
and from \eqref{B} we obtain
 \begin{equation}
\label{Bs}
b^{-m_b-1} < 4\cdot 8 e^{4C}|i|.
\end{equation}

Then, for each $i\in \mathcal I_n(\sigma_{\cf})$ and for the corresponding
$I \in \mathcal I_n(\Sigma_{\cf})$ we respectively determine
$\tau_b^i$ and $\Tau_b^I$ as the $b$-ary intervals of order $m_b$ or
the union of two consecutive $b$-ary intervals of order $m_b$  
 that
respectively contain $i$ and $I$ (Lemma~\ref{lemma:ladrillo}) with the
same choice for $\tau_b^i$ and $\Tau_b^I$. 
Thus, either   $|\tau_b^i|=|\Tau_b^I|=b^{-m_b}$ or 
 $|\tau_b^i|=|\Tau_b^I|=2 b^{-m_b}$.
Putting together  \eqref{eq0}, \eqref{B} and \eqref{Bs} we obtain
\begin{equation}\label{f}
\frac{1}{2\cdot 8 e^{4C}b}\ \leq \ \frac{|I|}{|\Tau_b^I|}\ \leq \ \frac{4|i|}{|\tau_b^i|} .
\end{equation}

We give bounds on the number of digits we add
in the $b$-ary expansion. For this we write, 
\[
n_b=order(\Tau_b)-order(\Sigma_b).
\]
Since 
\begin{equation*}
 |\Sigma_{cf}| \leq |\Sigma_b| \leq |\Sigma_{cf}| 2\cdot 8e^{4C} b
\end{equation*}
and  by Lemma~\ref{lemma:ladrillo},
$\Sigma_b$ consists of one
or two $b$-ary intervals,
\[
order(\Sigma_b)=
-\log_b(|\Sigma_b|)  \ \ \ \text{ or } \ \ \ 
order(\Sigma_b)=-\log_b\left(|\Sigma_b|/2\right),
\]
we have
\[
 \log_b\left(|\Sigma_{cf}|/2\right) \leq -order(\Sigma_b) \leq \log_b(|\Sigma_{cf}| 8e^{4C} b).
\]
And  since \ \
\begin{equation*}
2e^{-2nL+2C} |\Sigma_{cf}| \leq b^{-m_b} \leq b \ 2e^{-2nL+2C} |\Sigma_{cf}|
\end{equation*}
we have
\[
 \log_b(2e^{-2nL+2C} |\Sigma_{cf}|) \leq -order(\Tau_b)=-m_b \leq \log_b(b \ 2e^{-2nL+2C}|\Sigma_{cf}|).
\]
We obtain, for the number of digits $n_b$ we add to the $b$-ary expansion, that
\[
 2nL \log_b e - \log_b(4 b e^{2C}) \leq order(\Tau_b) -
 order(\Sigma_b) = n_b\leq 2nL\log_be+\log_b(4 e^{2C}b).
\]
Thus,
\begin{equation}
\label{n_b}
 2n \frac{L}{\log b} - \frac{2C}{\log b} - 3 \leq n_b \leq 2n \frac{L}{\log b} + \frac{2C}{\log b} + 3.
\end{equation}

{\em \ Bad zones.}  
We must pick one interval $i$ in $\mathcal I_n(\sigma_{\cf})$
 and one interval $I$ in $\mathcal I_n(\Sigma_{\cf})$ in a zone of low discrepancy. 
This is possible because  the measure of the   zones of large discrepancy 
decrease at an exponential rate in~$n$ 
while the measure of $\mathcal I_n(\sigma_{\cf})$  and $\mathcal I_n(\Sigma_{\cf})$  
decreases only as  $K / \sqrt{n}$. 
For each $n$ let   
\[
B^0_{b, \sigma_b, m_b, \epsilon} \text{ and }\  B^0_{b, \Sigma_b, m_b, \epsilon}
\] 
be the set of reals in the $b$-ary subintervals of $\sigma_b$ and
$\Sigma_b$ of order $m_b$ with $b$-discrepancy greater
than~$\epsilon$, respectively. And let
\[
B_{b, \sigma_b, m_b, \epsilon} \ \text{ and }\ \ 
B_{b, \Sigma_b, m_b, \epsilon}
\]  
be, respectively,  the union of   $B^0_{b, \sigma_b, m_b, \epsilon}$  and 
$B^0_{b, \Sigma_b, m_b, \epsilon}$ with those numbers
lying in a $b$-ary interval of the same order that is a neighbour to one in
$B^0_{b, \sigma_b, m_b, \epsilon}$ and $B^0_{b, \Sigma_b, m_b, \epsilon}$.

Recall that $m_b$ is the order of $\tau_b$ and $\Tau_b$, which we
reach by adding $n_b$ digits to the intervals $\sigma_b$ and
$\Sigma_b$, respectively. To define $\tau_b$ we need to add $n_b$ many
digits avoiding $b$-discrepancy greater than $\epsilon$.  Thus, using
the conditions $6/n_b \leq \epsilon \leq 1/b$, Lemma~\ref{lemma:BHS}
provides the estimate
\[
 \frac{|B_{b,\sigma_b,m_b,\epsilon}|}{|\sigma_b|} =
 \frac{|B_{b,\Sigma_b,m_b,\epsilon}|}{|\Sigma_b|} \leq 6b e^{-b   \epsilon^2 n_b / 6}.
\]
Notice that the factor $6$  on the rightmost expression 
comes from considering  the $b$-ary intervals in $B$ which are 
those in $B^0$  together with  their neighbour  $b$-ary intervals to the left and to the right.

By \eqref{f},
\begin{align*}
|\sigma_b| &\leq 4 |\sigma_{cf}|\cdot  2\cdot8 e^{4C} b, 
\\
|\Sigma_b| &\leq |\Sigma_{cf}| \cdot 2\cdot8 e^{4C} b
\end{align*}
and from \eqref{n_b} we know
\[
n_b \geq 2n \frac{L}{\log b} - \frac{2C}{\log b} - 3.
\]
We obtain
 \begin{align*}
 \frac{|B_{b,\sigma_b,m_b,\epsilon}|}{|\sigma_{\cf}| } \leq \frac{4|B_{b,\Sigma_b,m_b,\epsilon}|}{|\Sigma_{\cf}| } 
\leq A(b) e^{-b \epsilon^2 L n / (3 \log b)},
\end{align*}
where
\[
A(b)=\ 384e^{4c} b^2    e^{b\epsilon^2 \left(\frac{C}{3\log b}+\frac{1}{2}\right)}.
\]

Consider the bad zones with respect to the continued
fraction expansion. 
For each $n$,    let   
\[
\tilde B_{t, \Sigma_{\cf}, n, \epsilon} \text{ and } \tilde B_{t, \sigma_{\cf}, n, \epsilon}
\]
be the set of reals $x$ in the respective \cf-ary subintervals of
$\Sigma_{\cf}$ and $\sigma_{\cf}$ of relative order $n$ such that for
some word of length $t$ of digits less than or equal to $t$ the
\cf-discrepancy of $x$ is greater than $\epsilon-(t-1)/n$.  With the
condition $2(t-1)/\epsilon \leq n$, it suffices to consider
\cf-discrepancies greater than $\epsilon/2$.  Then
Lemma~\ref{lemma:KPWc} gives the estimate,
 \[
\frac{|\tilde B_{t, \Sigma_{\cf}, n, \epsilon}|}{|\Sigma_{\cf}|} =
\frac{|\tilde B_{t, \sigma_{\cf}, n, \epsilon}|}{|\sigma_{\cf}|} 
\leq t^t 6Me^{-\frac{(\epsilon/2)^2 n}{2M}},
\]
 where 
\[
M = \left\lceil t - \log\left( \frac{(\epsilon/2)^2}{2\log 2}\right) \right\rceil.
\]
\medskip

{\em \ Find $n_0$ large enough.}  We choose $n_0$ such that
 the measure of the union of the bad zones of
$\sigma_{\cf}$ and $\Sigma_{\cf}$ as well as the bad zones of
$\sigma_b$ and $\Sigma_b$ for $b=2,\ldots, t$ are small
enough so that we can find and interval in $\cc I_n(\sigma_{\cf})$ and interval 
in $\cc I_n(\Sigma_{\cf})$ outside the bad zones 
and defined by appending the same   $n_0$ \cf-digits to the 
\cf-expansion of $\Sigma_{\cf}$ and $\sigma_{\cf}$.

We find $n_0$ to be the least  integer $n$ such that   for each $b=2,\ldots,t$, 

\begin{align*}
A(b)  e^{-b \epsilon^2 L n / (3 \log b)}
& < \frac{1}{8} \frac{K}{t \sqrt{n}} \quad\text{and}\\
6M t^t e^{-\frac{(\epsilon/2)^2 n}{2M}} 
&< \frac{1}{8} \frac{K}{t\sqrt{n}},
\end{align*}

where  the factor $1/8$ ensures that 
\begin{enumerate}
\item[1.]  less than  $1/8$ of the measure of $\cc I_n(\sigma_{\cf})$ is covered with bad zones
  with respect to the continued fraction expansion of $\sigma_{\cf}$;
\item[2.] less than $1/8$ of the measure of $\cc I_n(\sigma_{\cf})$ is
  covered with the projection of the bad zones with respect to the
  corresponding continued fraction expansion of $\Sigma_{\cf}$;
\item[3.] less than  $1/8$ of the measure of $\cc I_n(\sigma_{\cf})$  is covered with bad zones
  with respect to the $b$-ary expansion inside $\sigma_{\cf}$ and inside $\Sigma_{\cf}$;

\item[4.] at least $5/8$ of the measure of $\cc I_n(\sigma_{\cf})$ is free of bad zones,
\item[5.]  the above 4 points also hold on interchanging
  $\sigma_{\cf}$ with $\Sigma_{\cf}$.
\end{enumerate}
In turn, this ensures the existence  of $n$ digits $\ell_1, \ldots, \ell_n$ such that 
\begin{itemize}
\item if $\sigma_{\cf}=[1, a_2, \ldots, a_N]$ and $\Sigma_{\cf}=[a_2, \ldots, a_N]$
then $\tau_{\cf}=[1, a_2, \ldots, a_N, \ell_1,\ldots,  \ell_n]$ and 
$\Tau_{\cf}=[a_2, \ldots, a_N, \ell_1,\ldots, \ell_n]$,
 \item $\tau_{\cf }\in \cc I_n(\sigma_{\cf})$,
 $\Tau_{\cf }\in \cc I_n(\Sigma_{\cf})$,
\item  $\tau_{cf}$ and $\Tau_{\cf}$  are not in bad zones.
\end{itemize}
So, we need to find  solutions to 
 \[
\sqrt{n} e^{-rn} \leq \gamma
 \]
for certain values of $r$  and $\gamma$.
Since for every positive $x$,  it holds that  $x < e^{x/2}$, we have
 \[
\sqrt{n} e^{-r n/2} \leq \frac{1}{r} r\ n\ e^{-rn/2} \ <
 \frac{1}{r}\ e^{r n/2 - r n/2} = \frac{1}{r}.
\]
Thus, we need $n$ such that 
\[
e^{-r n/2} \leq \gamma r
\] 
for each of the needed values $r$ and $\gamma$.
Hence, $n$ has to be as large as
\[
 -2/ r \ \log(\gamma r)
\]
for each of the needed values $r$ and $\gamma$.
Letting
\[
r^{(1)}=\epsilon^2/(8M)  \text{ and } \gamma^{(1)}=K/(6 M t^{t+1})
\]
and for $b=2, \dotsc, t$,
\[
r^{(b)}=b \epsilon^2 L/(3\log b) \text{ and } \gamma^{(b)}=K/(t\  A(b)),
\]
Taking 
\[
n = \max \left\{   -2/ r^{(b)} \log\left(\gamma^{(b)} r^{(b)}\right): 1\leq b\leq t\right\}
\cup \left\{\frac{6}{\epsilon},\frac{2(t-1)}{\epsilon},N_1 \right\}
\]
(recall that $N_1$ is the constant already fixed at the beginning of
this proof, provided by Lemma~\ref{lemma:cfgrande}) completes the
proof in case $t'=t$.  \medskip

The case $t'=t+1$ follows easily by taking first
$t$-bricks  $\vec \tau=(\tau_{\cf}, \tau_2, \ldots \tau_{t})$  and $\vec \Tau=(\Tau_{\cf}, \Tau_2, \ldots \Tau_{t})$ respectively refining 
$\vec \sigma$ and $\vec \Sigma$ with discrepancy less than $\epsilon$.
Since the refinement asks no discrepancy condition on $\tau_{t+1}$ nor on $\Tau_{t+1}$,
we only need to take $(t+1)$-ary intervals  $\tau_{t+1}$  and $\Tau_{t+1}$  
of order  $m_{t+1}$, or a union of two consecutive such intervals so that 
$|\tau_{t+1}|=|\Tau_{t+1}|$,  
$\tau_{cf} \subset \tau_{t+1}$,
$\Tau_{cf} \subset \Tau_{t+1}$ 
where  $m_{t+1}$  is  the maximum such that 
$|\Tau_{cf}| \leq (t+1)^{-m_{t+1}}$.
Applying  Lemma~\ref{lemma:ladrillo}  and using that   $|\tau_{cf}|\leq |\Tau_{cf}|$
we obtain
\[
|\tau_{cf}| \geq \frac{|\tau_{t+1}|}{2(t+1)} \text{ and }
|\Tau_{cf}| \geq \frac{|\Tau_{t+1}|}{2(t+1)}.
\]
This ensures that   $\vec \Tau=(\Tau_{\cf}, \Tau_2, \ldots, \Tau_{t+1})$
and    $\vec \tau=(\tau_{\cf}, \tau_2, \ldots, \tau_{t+1})$ are $(t+1)$-bricks.\qedhere
\end{proof}

\subsubsection{The iterative construction}
For simplicity, we fix the first digit of the continued fraction
expansion such that $x$ is in the interval $(1/2,1)$ and therefore
$1/x$ is in interval $(1,2)$, hence $\lfloor 1/x\rfloor=1$ and $y $ is
in interval $(0,1)$.

The construction works by steps, starting at step~$s=1$. We set the
largest integer base $t$, the discrepancy value $\epsilon$ and the
relative order $n$ of the new \cf-ary interval as functions of the
step~$s$. In particular, we define for every positive integer $s$,
\begin{align*}
t(s)&=\max(2, \lfloor \sqrt[5]{ \log s} \rfloor),
\\
\epsilon(s)&=\ 1/t(s).
\end{align*}
Clearly $t(s)$ is non-decreasing unbounded 
and $\epsilon(s)$ is  non-increasing and  goes to zero.
Consider the function  $n_0\big(\epsilon(s), t(s)\big)$ given by Lemma~\ref{lemma:main}  below
and notice that 
\[
n_0\big(\epsilon(s), t(s)\big)=O\big(t(s)^4 \log (t(s))\big).
\]
Let $n_{\start}$ be the minimum positive integer such that for every positive $s$,
\[
\lfloor \log s \rfloor + n_{\start} \geq n_0(\epsilon(s), t(s))
\]
and define
\[
n_0(s)= \lfloor \log s \rfloor + n_{\start}.
\]
The following is invariant in all steps $s$  of  the construction for $\vec \sigma_s=(\sigma_{cf}, \sigma_2, \ldots \sigma_{t(s)})$ and 
$\vec \Sigma_s=(\Sigma_{cf}, \Sigma_2, \ldots \Sigma_{t(s)})$:
\[
 |\Sigma_{\cf} |/4 \leq |\sigma_{\cf} |\leq  |\Sigma_{\cf} |,
\]
and for each $b=2, \ldots t(s)$,
\begin{align*}
|\sigma_b|&=|\Sigma_b|
\\
 \sigma_{\cf} &\subset \sigma_b,
\\
 \Sigma_{\cf} &\subset \Sigma_b,
\\
|\sigma_{\cf}| &\geq {|\sigma_b|}/({4\cdot 16 e^{4C}b}),
\\
|\Sigma_{\cf}| & \geq {|\Sigma_b|}/({16e^{4C}b}).
\end{align*}

\begin{description}
\item[\textnormal {\em Initial step, $s=1$}] 
 \begin{align*}
  \vec{\sigma}_1&=( \sigma_{\cf}, \sigma_2), \text{ for } 
  \sigma_2=\sigma_{\cf}=(1/2,1)=I_{[0;1]}  
\\ \vec{\Sigma}_1&=( \Sigma_{\cf}, \Sigma_2), \text{ for }
  \Sigma_2=\Sigma_{\cf}=(0,1)=I_{[0;]}.
\end{align*}

\item[\textnormal {\em Recursive step, $s>1$}] 
Assume that we already have two bricks
\[
\vec{\sigma}_{s-1}=(\sigma_{\cf }, \sigma_2,\dotsc,\sigma_{t(s-1)} )
  \quad\text{and}\quad
\vec{\Sigma}_{s-1}=(\Sigma_{\cf }, \Sigma_2,\dotsc,\Sigma_{t(s-1)} ).
\]
We choose
\begin{align*}
\vec{\sigma}_s &=(\tau_{\cf},\tau_2,\dotsc,\tau_{t(s)})
\\
\vec{\Sigma}_s &=(\Tau_{\cf},\Tau_2,\dotsc,\Tau_{t(s)})  
\end{align*}
 such that 
if $\sigma_{\cf}=[a_1, \ldots, a_N]$ and 
$\Sigma_{\cf}=[a_2, \ldots, a_N]$  then 
\begin{align*}
\tau_{\cf} &= [a_2, \ldots a_N, a_{N+1}, \ldots a_{N+n_0(s)}],
\\
\Tau_{\cf} &= [a_1, \ldots a_N, a_{N+1}, \ldots a_{N+n_0(s)}]
\end{align*}
are the  leftmost \cf-subintervals of $\sigma_{\cf}$ and $\Sigma_{\cf}$ of relative order $n_0(s)$
ensuring that  $\vec{\sigma}_{s}$ refines  $\vec{\sigma}_{s-1}$ and 
$\vec{\Sigma}_{s}$ refines $\vec{\Sigma}_{s-1}$,
both with discrepancy less than $\epsilon(s)$.
\end{description}

\subsection{Correctness of the construction}
The existence of the sequences
$\vec{\sigma}_1, \vec{\sigma}_2, \ldots$ and
$\vec{\Sigma}_1, \vec{\Sigma}_2, \ldots$ is guaranteed by
Lemma~\ref{lemma:main}.  Let $x$ and $y$ be respectively defined by
the intersection of all the intervals in the respective sequences.

\subsubsection{The numbers $x$ and $1/x$ are continued fraction normal}
The construction ensures that, removing the first digit in the
continued fraction expansion of $x$, the  continued fractions of
$x$ and $y$ are identical.
Since $y =1/x -\lfloor 1/x\rfloor =1/x -1$, 
to show that $x$ and $1/x$ are continued
fraction normal it suffices to show that $x$ and $y$ are continued
fraction normal.

Let $v$ be a word of  $m$ integers $v_1,\dotsc,v_m$ 
and let $\tilde \epsilon > 0$.
Choose $s_0$ so that 
$m\leq t(s_0)$, $\max\{v_1, \dotsc,v_m\}\leq t(s_0)$ and  $\epsilon(s_0) \leq \tilde \epsilon / 4$.
At each step $s$ after $s_0$, 
the continued fraction expansions of  $x$ and  $y$
are constructed  by appending a word $u_s$ 
such that $|u_s| = n_0(s)$ and
\[
\D{\cf-ary}_{v,|u_s|}(u_s) \ < \ 
\epsilon(s) - \frac{t(s-1)-1}{|u_s|} \ < \ \epsilon(s) - \frac{m-1}{|u_s|}.
\]
  By Lemma~\ref{lemma:Dcf} (Item 1) applied several times, we obtain for every $s \geq s_0$:
\[
\D{\cf-ary}_{v,|u_{s_0} \ldots u_s|}(u_{s_0}u_{s_0+1} \ldots u_s) < \epsilon(s_0).
\]
  Next, by Lemma~\ref{lemma:Dcf} (Item 2b) there is $s_1$ sufficiently large such
  that for every $s \geq s_1$,
 \[
\D{\cf-ary}_{v,|u_1 \ldots u_s|}(u_1 \ldots u_s) < 2\epsilon(s_0).
\]  
  Since $n_0(s)$ grows logarithmically, the inequality
\[
n_0(s)\leq 2\epsilon(s_0)\sum_{j=1}^{s-1} n_0(j)
\]
  holds from certain point on.
  Hence, by Lemma~\ref{lemma:Dcf} (Item 2a), we have for every $s$ sufficiently large and 
for every $\ell$ such that  $|u_1 \ldots u_{s-1}| < \ell \leq |u_1\ldots u_s|$,
  \[
   \D{\cf-ary}_{v,\ell}(u_1 \ldots u_s) < 4 \epsilon(s_0) < \tilde \epsilon.
  \]
  It follows that  $x$ and $y$ are   continued fraction normal.

\subsubsection{The numbers $x$ and $1/x$ are absolutely normal}

Absolute normality follows by showing simple normality to all integer
bases greater than or equal to~$2$.  We prove that $x$ simply normal
to all integer bases $b\geq 2$, the case of $y$ is alike. Since
$1/x=\lfloor 1/x\rfloor +y =1+ y$, we conclude that $1/x$ is also
simply normal to all integer bases $b\geq 2$.

Fix a base $b\geq 2$ and let $\tilde \epsilon > 0$. We choose $s_0$ such
that $t(s_0) \geq b$ and $\epsilon(s_0)\leq \tilde \epsilon / 4$.  At
each step $s$ after $s_0$ the expansion of $x$ in base $b$ was
constructed by appending blocks $u_s$ such that
$\D{b-ary}_{|u_s|}(u_s)< \epsilon(s_0)$.  Thus, by Lemma~\ref{lemma:D}
(Item 1) for any $s > s_0$,
\[
\D{b-ary}_{|u_{s_0} \ldots u_s|}(u_{s_0} \ldots u_s) < \epsilon(s_0). 
\]
  Applying Lemma~\ref{lemma:D} (Item 2a), we obtain $s_1$ such that 
  for any $s > s_1$
\[
\D{b-ary}_{|u_1 \ldots u_s|}(u_1 \ldots u_s) < 2\epsilon(s_0).
\]
Call $n_b(j)$ the relative order of the $b$-interval of $\vec \sigma_{j}$
with respect to the $b$-interval of $\vec \sigma_{j-1}$. 
The inequalities 
\[ 
2n_0(j) \frac{L}{\log b} - \frac{2C}{\log b} - 3 
\leq n_b(j) \leq 2n_0(j) \frac{L}{\log b} + \frac{2C}{\log b} + 3 
\]
provided by \eqref{n_b} in the proof of Lemma~\ref{lemma:main}, tell us that
$n_b(j)$ grows logarithmically.
Then, for~$s$ sufficiently large we have
 \[
 n_b(s) \leq 2\epsilon(s_0) \sum_{j=1}^{s-1}  n_b(j).
 \]
 By Lemma~\ref{lemma:D} (Item 2b) we conclude that for~$s$ sufficiently large
 and $|u_1 \ldots  u_{s-1}| \leq \ell \leq |u_1 \ldots  u_s|$, 
\[
\D{b-ary}_{\ell}(u_1 \ldots  u_s)
<\ 4 \epsilon(s_0) < \tilde \epsilon.
\]
Thus $x$ is simply normal to base $b$, for every $b\geq 2$.

\subsection{The numbers $x$ and $1/x$ are computable}
A real number is computable if, for some integer $b\geq 2$, there is
an algorithm that produces, one after the other, the digits in its
$b$-ary expansion.  In addition to L\'evy's constant
$L=\pi^2/(12 \log 2)$, our construction of $x$ and $y$ depends on
three constants, $K, C$ and $N_1$ indicated in Lemma
\ref{lemma:cfgrande}.  Since these three constants can be taken to be
integer values (and they do not need to be minimal), there is an
algorithm that, for any given integer $b\geq 2$, produces the $b$-ary
expansion of $x$ and $1/x$.  Therefore, $x$ and $1/x$ are computable.
This completes the proof of Theorem~\ref{thm}.  \bigskip \bigskip

\noindent
{\bf Acknowledgements.}
The first author is supported by grant STIC-Amsud 20-STIC-06 and
PICT-2018-02315. The second author is supported by project
ANR-18-CE40-0018 funded by the French National Research Agency.  The
problem was brought to the authors' attention at the workshop on
``Discrepancy Theory and Applications -- Part 2'', CIRM 2641, February
2021, organized by Manfred Madritsch, Joel Rivat and Robert Tichy.

\bibliography{x1x}

\end{document}